\newtheorem{theorem}{Theorem}
\newtheorem{definition}{Definition}
\newtheorem{proposition}{Proposition}
\newtheorem{lemma}{Lemma}
\newenvironment{proof}{\noindent \emph{Proof. }}{\hfill \hbox{\rlap{$\sqcap$}$\sqcup$}\\}
\title{No Weak Local Rules for the $4p$-Fold Tilings\thanks{This work was supported by the ANR project QuasiCool (ANR-12-JS02-011-01)}}
\author{
Nicolas Bédaride
\footnote{Aix Marseille Univ., CNRS, Centrale Marseille, I2M, UMR 7373, 13453 Marseille, France.}
\and
Thomas Fernique
\footnote{Univ. Paris 13, CNRS, Sorbonne Paris Cité, UMR 7030, 93430 Villetaneuse, France.}
}
\date{}
\begin{document}

\maketitle

\begin{abstract}
  On the one hand, Socolar showed in 1990 that the $n$-fold planar tilings admit weak local rules when $n$ is not divisible by $4$ (the $n=10$ case corresponds to the Penrose tilings and is known since 1974).
  On the other hand, Burkov showed in 1988 that the $8$-fold tilings do not admit weak local rules, and Le showed the same for the $12$-fold tilings (unpublished).
  We here show that this is actually the case for all the $4p$-fold tilings.
\end{abstract}

\section{Introduction}

Quasicrystals are ordered but nevertheless non-periodic materials.
Their structure is commonly modeled by {\em tilings}, that are covering of the Euclidean plane or space by non-overlapping compact sets called {\em tiles}.
The interesting structure of numerous quasicrystals is actually only two-dimensional, with the third dimension corresponding to periodically stacked arrangement of atoms.
This explains why the tilings of the plane have retained no less attention than the tilings of the space -- and we do focus here on the former.
When the tiles are moreover rhombi, one speaks about {\em rhombus tilings}.
The rhombus tilings have the remarkable pro\-perty that they can be {\em lifted} in a higher dimensional space.
In particular, those whose lift stay at bounded distance from an affine plane are said to be {\em planar}: they have a long range order which make them especially suitable to model the structure of quasicrystals.\\

As for any material, understanding a quasicrystal means not only understanding its structure but also its stability, that is, how finite-range energetic interactions make the atoms achieving such a structure.
In terms of tilings, this means understanding how constraints on the way neighbor tiles can fit together -- one speaks about {\em local rules} -- enforce the planarity of a tiling.
Local rules can be formally defined in several ways.
Here, we shall follow Levitov \cite{levitov}, who considered {\em undecorated} local rules, one of the simplest model.
For the planar rhombus tilings, Levitov also introduced {\em weak} and {\em strong} local rules, the formal definition of which shall be further recalled.
In this context, the goal is to find a characterization of the planar rhombus tilings which admit undecorated weak local rules.
This remains an open problem.
Let us however mention that such a characterization has been recently obtained when {\em decorated} local rules are allowed (see \cite{FS}).
In terms of symbolic dynamics, the tiling sets defined by undecorated or decorated local rules are respectively called {\em tiling spaces of finite type} or {\em sofic tiling spaces}  (see \cite{robinson}).\\

Among the several conditions on the planar rhombus tilings with (undecorated) weak or strong local rules that have been found (\cite{BF,subperiods,burkov,katz,KP,le92,le92b,le92c,le93,le95,levitov,socolar}), we are interested in thoses which deal with $n$-fold tilings.
In \cite{socolar}, Socolar proved that the $n$-fold tilings admit weak local rules as soon as $n$ is not a multiple of $4$.
This disproved the common belief that whenever a planar rhombus tiling admits weak local rules, then the plane its lift stays at bounded distance of can always be defined by {\em quadratic} irrationalities (irrationalities are cubic already for $n=7$). 
Socolar moreover explicitly derived simple local rules from what he called the {\em alternation condition}.
Without going into details, this condition states that each rhombus tile must ``alternate'' in a specific way with its mirror image with respect to one of its edges.
The problem with the $4p$-fold tilings is that they have square tiles which are equal to their own mirror image! 
Actually, Burkov proved in \cite{burkov} that the $8$-fold tilings, also known as the Ammann-Beenker tilings, do not admit weak local rules\footnote{Note that it admits {\em decorated} local rules, as proved by Robert Ammann himself, see \cite{AGS,socolar2}}.
To prove this, he provided a one-parameter fa\-mi\-ly of planar rhombus tilings which contains the $8$-fold tilings, and such that the closer the parameter is to the one of the $8$-fold tilings, the larger is the smallest pattern which allows to distinguish the tilings corresponding to each parameter.
We here extend this by providing, for each $p$, such a one-parameter family for the $4p$-fold tiling.
This yields our main result:

\begin{theorem}\label{th:main}
The $4p$-fold tilings do not admit weak local rules.
\end{theorem}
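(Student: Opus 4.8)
\medskip
\noindent\emph{Plan of proof.}
The plan is to follow Burkov's strategy for the octagonal ($8$-fold) case \cite{burkov}. Unwinding Levitov's definition \cite{levitov}, a planar tiling $T_0$ admits weak local rules of radius $R$ iff every rhombus tiling by the same prototiles all of whose radius-$R$ patterns occur in $T_0$ is planar with the slope of $T_0$. So it suffices to construct, for each $p$, a one-parameter family $(T_t)_{0\le t<\varepsilon}$ of planar rhombus tilings by the $4p$-fold prototiles such that: (i) $T_0$ is the $4p$-fold tiling, of slope $E_0$; (ii) for $t>0$, $T_t$ is planar of slope $E_t\ne E_0$, hence is not a $4p$-fold tiling; (iii) for every $R$ there is $t_R>0$ such that every radius-$R$ pattern occurring in $T_t$, $0<t<t_R$, already occurs in $T_0$. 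Given such a family, no $R$ can be a radius of weak local rules for $T_0$, witnessed by any $T_t$ with $0<t<t_R$.

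To build the family, work in the lift: a rhombus tiling by the $2p$ directions $u_k=(\cos\tfrac{k\pi}{2p},\sin\tfrac{k\pi}{2p})$, $k=0,\dots,2p-1$, lifts to $\mathbb{Z}^{2p}$ via $e_k\mapsto u_k$, and the $4p$-fold tiling is the canonical cut-and-project tiling whose lift stays at bounded distance from the $2$-plane $E_0=\mathrm{span}\big((\cos\tfrac{k\pi}{2p})_k,(\sin\tfrac{k\pi}{2p})_k\big)\subset\mathbb{R}^{2p}$. The reason $4\mid n$ matters is the coordinate $k=p$: there $u_p=(0,1)$, so $\{u_0,u_p\}$ spans a unit square, a prototile equal to its own mirror image --- precisely the feature that makes Socolar's alternation condition vacuous here. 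That self-symmetry leaves a degree of freedom, and I would exploit it by taking $E_t$ to be the graph over $E_0$ of a small linear map $E_0\to E_0^{\perp}$ in a direction along which the square-tile layers can reorganize freely, so that for $t$ small each $E_t$ is admissible --- the physical projection $\pi:\mathbb{R}^{2p}\to\mathbb{R}^2$, $e_k\mapsto u_k$, restricts to an isomorphism on $E_t$, and $E_t$ is totally irrational for a co-null set of $t$, hence carries a genuine non-periodic planar tiling $T_t$ by the $4p$-fold prototiles --- while $E_t\ne E_0$.

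The main obstacle is (iii): that the perturbation creates no new pattern below scale $\sim 1/t$. The mechanism, as for $n=8$, is that a small linear phason strain is, locally, indistinguishable from a relocation of the acceptance window: over any region of diameter $R$ the lift of $T_t$ differs in the internal space from a suitable copy of the lift of $T_0$ by only $O(tR)$, so for $t<c/R$ the two exhibit the same radius-$R$ patterns. To make this rigorous I would either estimate acceptance domains directly --- showing that, thanks to the symmetry of $E_0$, every occurring radius-$R$ pattern has an acceptance domain whose inscribed radius is bounded below by an explicit function of $R$, which then survives an $O(tR)$ deformation --- or argue through subperiods \cite{subperiods}: compute the subperiods of $E_0$ (a case analysis is needed, since the vanishing of $\cos\tfrac{p\pi}{2p}$ degenerates many of the relevant $3\times 3$ determinants) and check that every $E_t$ in the family carries all of them, so that no finite set of subperiods, hence no finite radius, can single $E_0$ out among admissible slopes. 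In either route the bulk of the remaining effort is to keep the estimates uniform in $p$.
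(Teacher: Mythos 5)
Your high-level strategy --- a Burkov-style one-parameter family of planar slopes $E_t$ with $E_0$ the $4p$-fold slope, $E_t\ne E_0$, and the property that for every $R$ the radius-$R$ patterns of the tilings of slope $E_t$ already occur in the $4p$-fold tilings for $t$ small --- is exactly the skeleton of the paper's proof, and you correctly point to subperiods as the source of the deformation direction. But there is a genuine gap at your step (iii), which is where all the work lies. The inference ``every $E_t$ in the family carries all subperiods of $E_0$, hence no finite radius can single $E_0$ out'' is a non sequitur: subperiods are consequences of the patterns, not conversely, and equality of subperiods does not by itself give inclusion of finite atlases. Your other route, the $O(tR)$ phason-strain heuristic together with a lower bound on the inscribed radius of acceptance domains, cannot work as stated either: applied to a generic deformation direction it would prove too much and contradict Socolar's theorem that the $n$-fold tilings with $4\nmid n$ do admit weak local rules. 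The danger is not that patterns of the $4p$-fold tiling disappear for $t>0$, but that the tilings of slope $E_t$ acquire \emph{new} patterns, coming from slivers in the window of $E_t$ that are degenerate for $E_0$; such a sliver opens exactly when a coincidence of $E_0$ (a set of $n-1$ projected $(n-3)$-dimensional faces of $\mathbb{Z}^n$ meeting in the internal space, Definition~\ref{def:coincidence}) is broken by the deformation, and its inscribed radius is then $O(t)$, not bounded below in terms of $R$ alone.

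What is missing from your proposal is precisely the bridge the paper builds: (a) if every coincidence of $E_0$ persists along the curve, then no new pattern of any size appears for $t$ close enough to $0$, hence no weak local rules (Proposition~\ref{prop:coincidence}, proved by tracking vertices of the window polytopes); and (b) the coincidences of $E_{t_p}$ do persist along the specific subperiod-preserving family (Lemma~\ref{lem:4p_family_coincidences}). Step (b) is the technical heart and rests on concrete algebra special to the $4p$-fold case: the family is cut out by $G_{12}=1$, $G_{13}=t$ with $G_{13}G_{24}=t_p^2$ held fixed (a Chebyshev-type recurrence forces this product to finitely many values), $E_t$ admits a basis whose entries lie in $\mathbb{Q}(t_p^2)$ up to multiplication by $t^{\pm1}$ (Proposition~\ref{prop:4p_fold_family2}), and $t_p\notin\mathbb{Q}(t_p^2)$ (Lemma~\ref{lem:incommensurables}); the equations expressing a coincidence then split into types that are satisfied for all $t$ either because they are exactly the subperiod relations or because the irrationality forces both coefficients of an equation $a+bt=0$ to vanish. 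None of this appears in your sketch, and the residual difficulty is not ``uniformity in $p$'' (the statement is for each fixed $p$); it is proving coincidence persistence at all.
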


Let us briefly describe the two main tools that shall be used to prove this.
The first one is the notion of {\em window}, which is classic in the context of so-called {\em cut and project tilings}.
It is a convenient tool to study the patterns that appear in a planar rhombus tilings, and we shall especially rely on results obtained by Julien in \cite{julien}.
The se\-cond tool is the notion of {\em subperiod}, introduced by the authors in \cite{BF,subperiods} and which corresponds to the {\em second-intersection condition} earlier introduced by Levitov in \cite{levitov} and used, {\em e.g.}, by Le in \cite{le92c}.
Roughly speaking, a subperiod is a rational dependency between {\em some} of the entries of vectors which generate a (possibly irrational) plane.
This is the notion that led us to the one-paramater families of planar rhombus tilings that is used to show Theorem~\ref{th:main}.\\

The paper is organized as follows.
In Section~\ref{sec:settings}, we formally define the above mentioned notions: rhombus tilings and their lift in a higher dimensional space, planar tilings, $n$-fold tilings, weak local rules and subperiods.
We also review some basic properties of Grassmann coordinates.
In Section~\ref{sec:subperiods}, we define the one-parameter families of planar rhombus tilings that is used to show Theorem~\ref{th:main}.
In Section~\ref{sec:window}, we briefly recall known results on the window of a planar tiling and introduce the notion of {\em coincidence}.
We finally prove Theorem~\ref{th:main} in Section~\ref{sec:coincidences}.
\section{Settings}
\label{sec:settings}

\paragraph{Rhombus tiling.}
Let $\vec{v}_1,\ldots,\vec{v}_n$ be $n\geq 3$ pairwise non-collinear unit vectors of the Euclidean plane.
They define the $\binom{n}{2}$ rhombus {\em prototiles}
$$
T_{ij}=\{\lambda\vec{v}_i+\mu\vec{v}_j~|~0\leq\lambda,\mu\leq 1\}.
$$
A {\em tile} is a translated prototile (tile rotation or reflection are forbidden).
A {\em rhombus tiling} is a covering of the Euclidean plane by interior-disjoint tiles satisfying the {\em edge-to-edge} condition: whenever the intersection of two tiles is not empty, it is either a vertex or an entire edge.

\paragraph{Lift.}
Let $\vec{e}_1,\ldots,\vec{e}_n$ be the canonical basis of $\mathbb{R}^n$.
A rhombus tiling is {\em lifted} in $\mathbb{R}^n$ as follows: an arbitrary vertex is first mapped onto the origin of $\mathbb{R}^n$, then each tile $T_{ij}$ is mapped onto the $2$-dimensional face of a unit hypercube of $\mathbb{Z}^n$ generated by $\vec{e}_i$ and $\vec{e}_j$, with two tiles adjacent along an edge $\vec{v}_i$ being mapped onto two faces adjacent along an edge $\vec{e}_i$.
This lifts the boundary of a tile -- and by induction the boundary of any patch of tiles -- onto a closed curve of $\mathbb{R}^n$ and hence ensures that the image of a tiling vertex does not depend on the path followed to get from the origin to this vertex.
The lift of a tiling is thus a ``stepped'' surface in $\mathbb{R}^n$ (unique up to the choice of the initial vertex).

\paragraph{Planar tiling.}
A rhombus tiling is said to be {\em planar} if there is a $t\geq 1$ and an affine plane $E\subset \mathbb{R}^n$ such that the tiling can be lifted into the tube $E+[0,t]^n$ (we need $t\geq 1$ to have complete tiles in the tube).
The smallest suitable $t$ is called the {\em thickness} of the tiling, and the corresponding $E$ is called the {\em slope} of the tiling.
Both are uniquely defined.
A planar rhombus tiling is thus an {\em approximation} of its slope: the less the thickness, the better the approximation.

\paragraph{$n$-fold tiling.}
For $n\geq 4$ even, the {\em $n$-fold tilings} are the thickness $1$ planar tilings whose slope is generated by the vectors whose $k$-th entry are respectively $\cos(2k\pi/n)$ and $\sin(2k\pi/n)$, for $0\leq k<n/2$.
The lift of a $n$-fold tiling thus lives in $\mathbb{R}^{n/2}$.
The name comes from the fact that they admit a local $n$-fold rotational symmetry: any finite pattern of such a tiling indeed also appears in its image under a rotation by $2\pi/n$.
Fig.~\ref{fig:n_fold_tilings} illustrates this.

\begin{figure}[hbtp]
\includegraphics[width=0.3\textwidth]{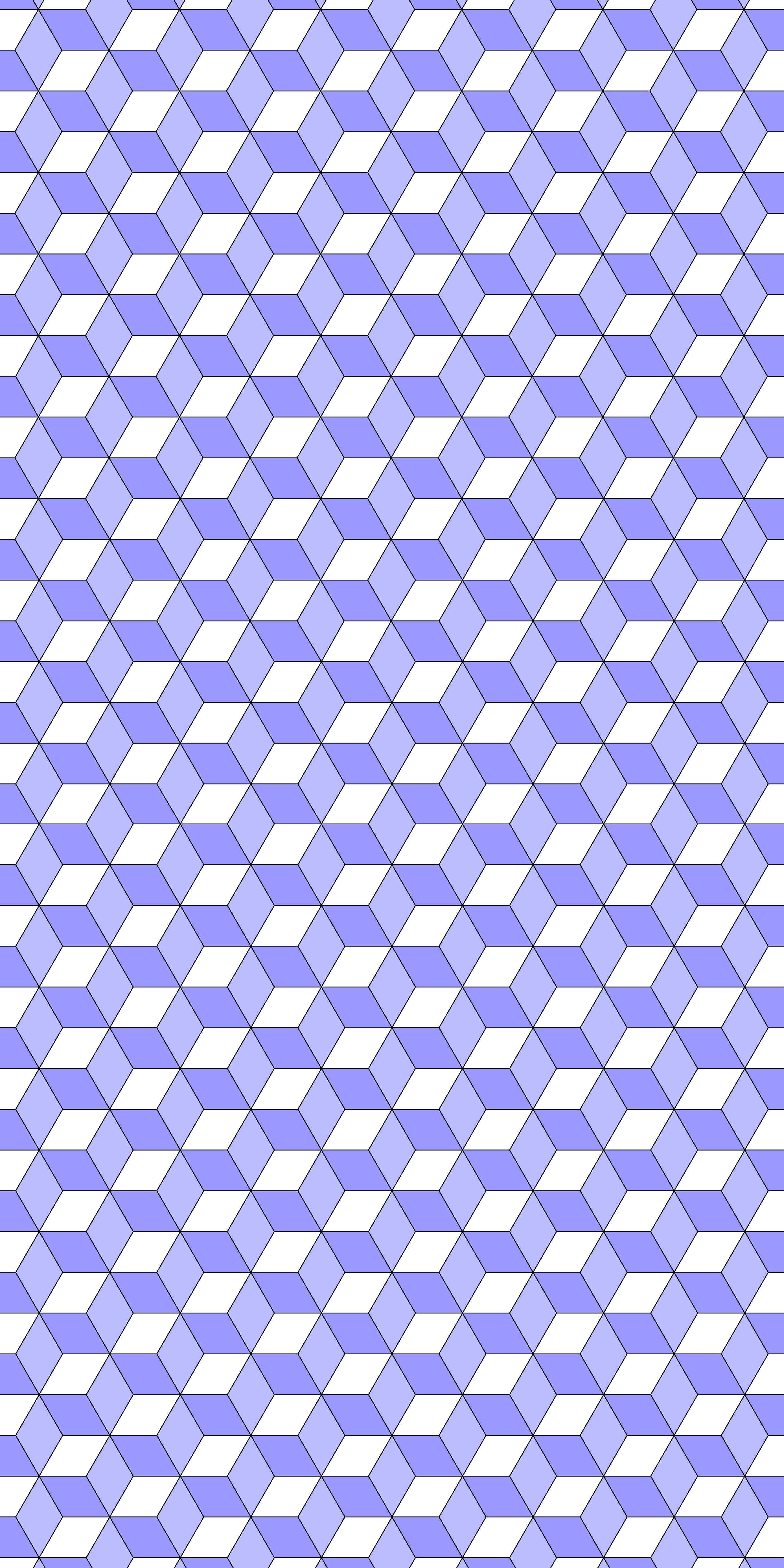}\hfill
\includegraphics[width=0.3\textwidth]{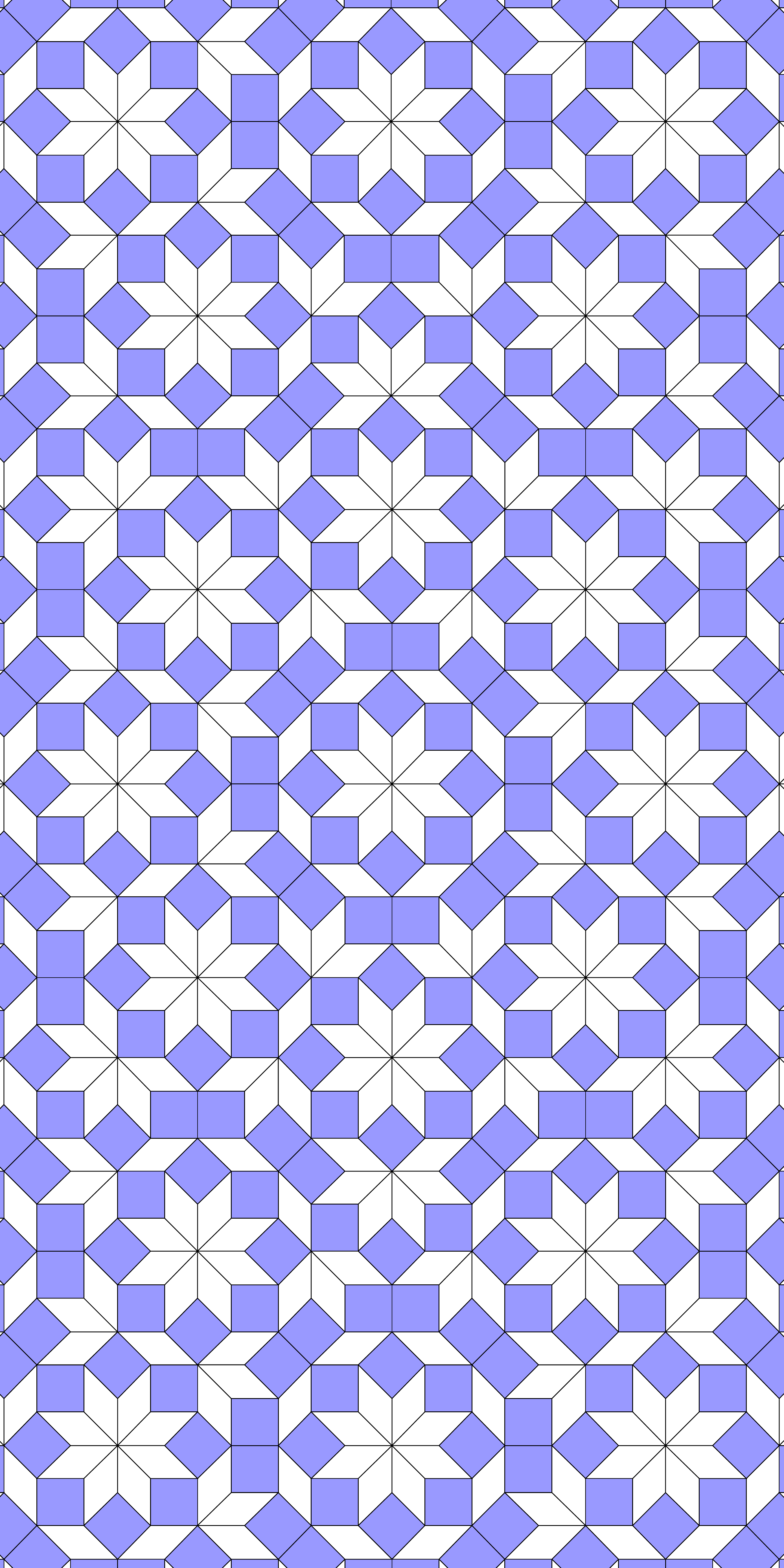}\hfill
\includegraphics[width=0.3\textwidth]{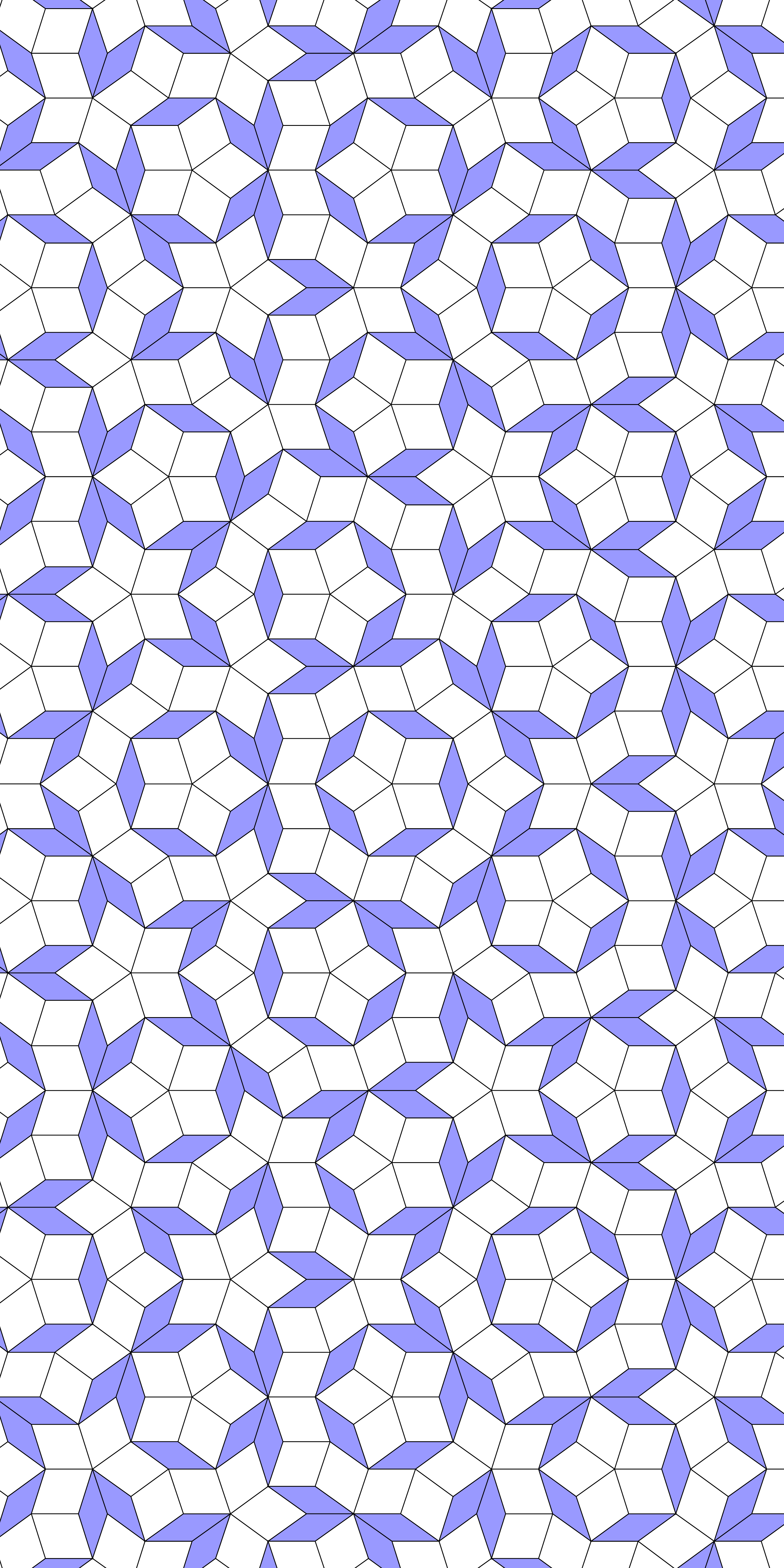}
\caption{From left to right: $6$-fold, $8$-fold and $10$-fold tilings.}
\label{fig:n_fold_tilings}
\end{figure}

\paragraph{Weak local rule.}
Given a tiling $\mathcal{T}$ and a closed ball of radius $r\geq 0$, the tiles of $\mathcal{T}$ that intersect this ball form a pattern called a {\em $r$-map} of $\mathcal{T}$.
The finite set of all the $r$-maps of $\mathcal{T}$ (considered up to a translation) defines the {\em $r$-atlas} of $\mathcal{T}$, denoted by $\mathcal{T}(r)$.
A thickness $1$ planar rhombus tiling $\mathcal{P}$ is then said to admit {\em weak local rules} if there are $r\geq 0$ and $t\geq 1$ such that any rhombus tiling $\mathcal{T}$ with $\mathcal{T}(r)\subset\mathcal{P}(r)$ is planar with the same slope as $\mathcal{P}$ and thickness at most $t$.
In other words, a planar tiling admits weak local rules if its slope is characterized by its patterns of a finite given size.
Fig.~\ref{fig:allowed_patterns} illustrates this.

\begin{figure}[hbtp]
\includegraphics[width=0.25\textwidth]{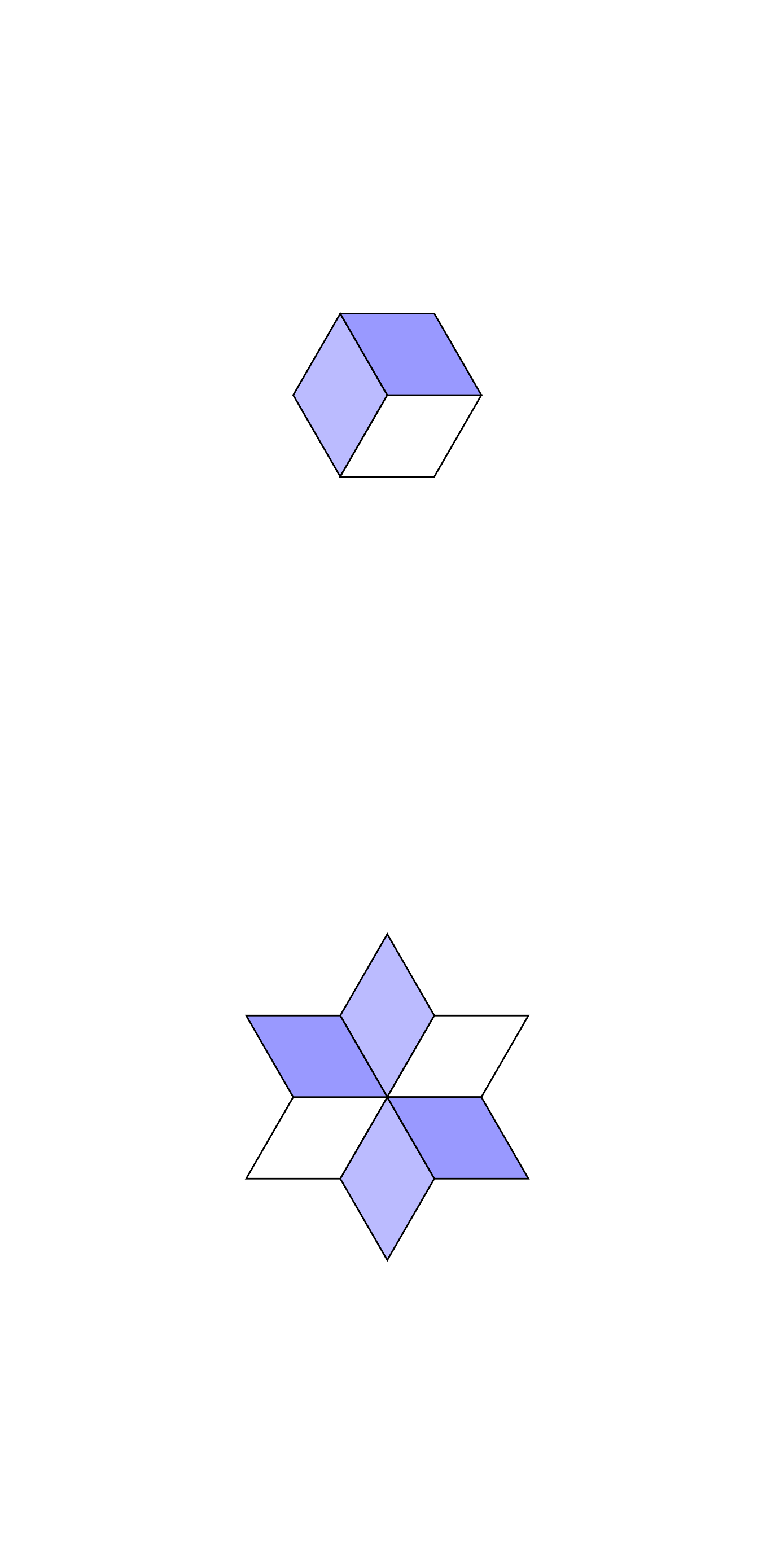}\hfill
\includegraphics[width=0.25\textwidth]{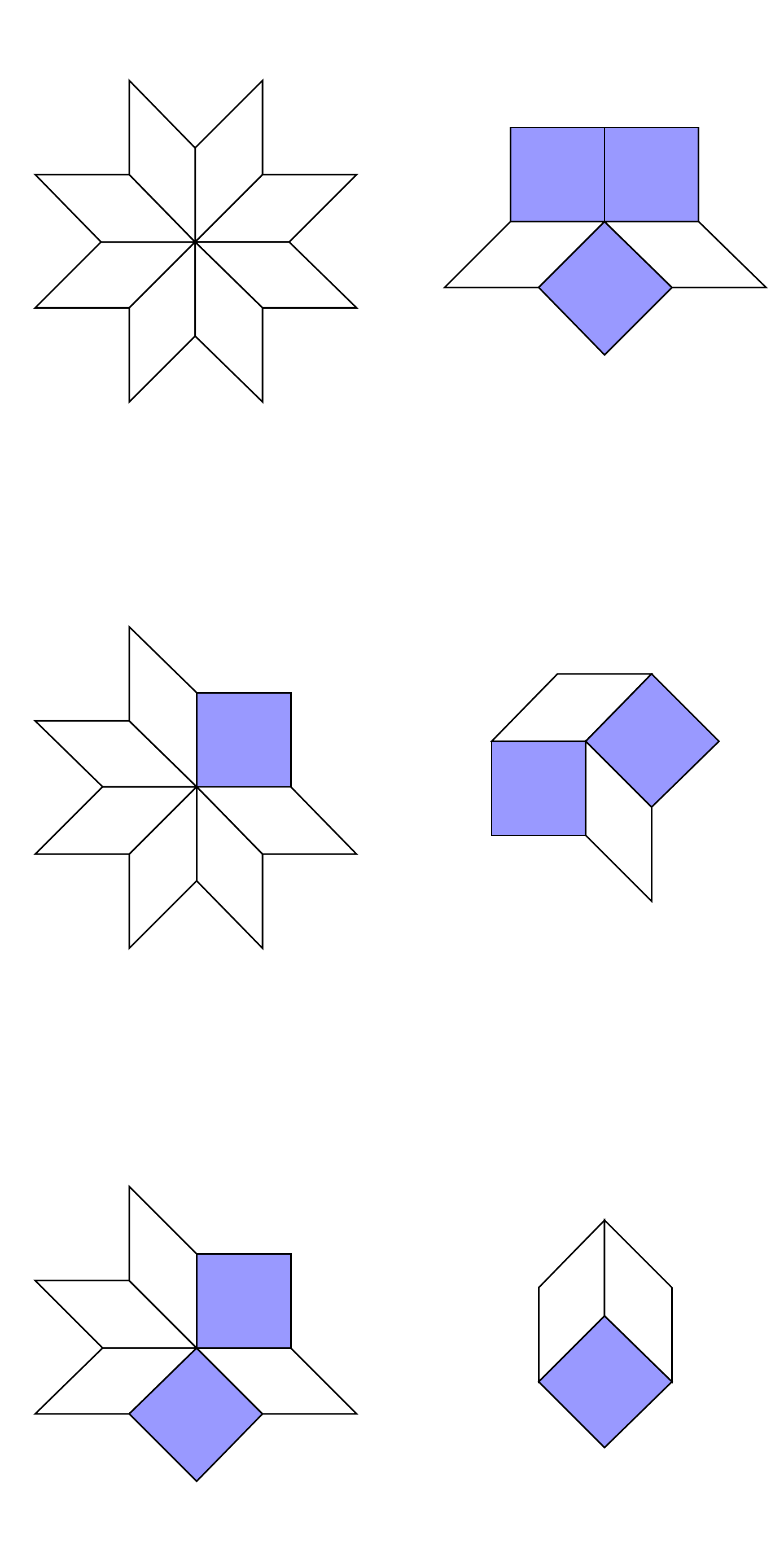}\hfill
\includegraphics[width=0.25\textwidth]{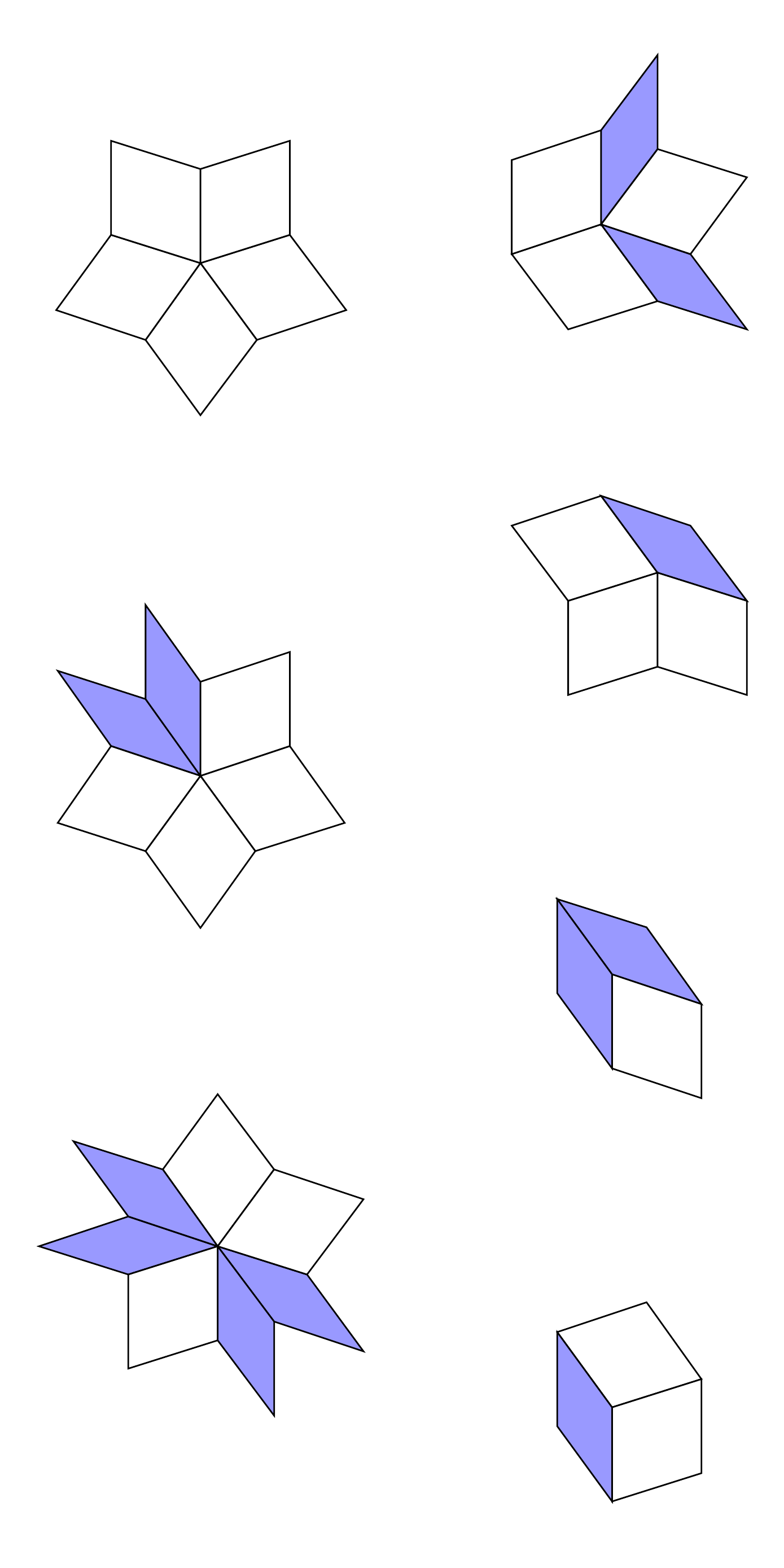}
\caption{
From left to right, the $0$-atlas (also called {\em vertex atlas}) of the $6$-fold, $8$-fold and $10$-fold tilings (up to a rotation).
Compare with Fig.~\ref{fig:n_fold_tilings}.
It is easy to see that the $6$-fold tilings are charaterized by their $0$-atlas.
It is known (see, {\em e.g.}, \cite{senechal}, Th.~6.1 p.~177) that the same holds for the $10$-fold tilings.
On the contrary, Burkov proved in \cite{burkov} that this does not hold for the $8$-fold tilings.
}
\label{fig:allowed_patterns}
\end{figure}

\paragraph{Grassmann coordinate.}
Let $\mathbb{G}(2,n)$ denote the set of the two-dimensional planes in $\mathbb{R}^n$.
If $E\in\mathbb{G}(2,n)$ is generated by $(u_1,\ldots,u_n)$ and $(v_1,\ldots,v_n)$, then its Grassmann coordinates are the $\binom{n}{2}$ real numbers
$$
G_{ij}:=u_iv_j-u_jv_i,
$$
for $i<j$.
In the case of the $n$-fold tilings:
$$
G_{ij}=\sin\left(\frac{2(j-i)\pi}{n}\right).
$$
The Grassmann coordinates are defined up to a common multiplicative factor and turn out to not depend on the choice of the generating vectors.
Moreover, a non-zero $\binom{n}{2}$-tuple of reals are the Grassmann coordinates of some plane if and only if they satisfy, for any $i<j<k<l$, the so-called {\em Plücker relation}:
$$
G_{ij}G_{kl}=G_{ik}G_{jl}-G_{il}G_{jk}.
$$
By extension, we call Grassmann coordinates of a planar rhombus tiling the Grassmann coordinates of its slope.
They can actually be ``read'' on the tiles: one can indeed show that the frequencies of the $T_{ij}$'s in a planar rhombus tiling are given by the absolute values of the $G_{ij}$'s (up to normalization).
The sign of $G_{ij}$ is equal to the sign of $\det(\vec{v}_i,\vec{v}_j)$, where $\vec{v}_i$ and $\vec{v}_j$ are the vectors of the Euclidean plane which define the tile $T_{ij}$: it is thus independant of the slope.

\paragraph{Non-degeneration.}
A rhombus tiling is said to be {\em nondegenerate} if it contains at least one tile $T_{ij}$ for any $i<j$.
In particular, a planar tiling is nondegenerate if and only if its slope has only non-zero Grassmann coordinates.
The $n$-fold tilings are nondegenerate.
In what follows, we shall implicitly consider only nondegenerate tilings.

\paragraph{Subperiod.}
An {\em $ijk$-subperiod} of a plane $E\in\mathbb{G}(2,n)$ is a non-zero integer vector $(p,q,r)\in\mathbb{Z}^3$ which is a prime period of the orthogonal projection of $E$ onto the three basis vectors $\vec{e}_i$, $\vec{e}_j$ and $\vec{e}_k$.
In terms of Grassmann coordinates, this corresponds to the linear relation
$$
pG_{jk}-qG_{ik}+rG_{ij}=0.
$$
By extension, we call subperiod of a planar rhombus tiling any subperiod of its slope.
It corresponds to a periodic direction in the orthogonal projection on three basis vectors of the tiling lift.
Fig.~\ref{fig:subperiods} illustrates this.
The motivation to introduce subperiods in \cite{subperiods} was to find weak local rules for planar tilings.
We shall use them here, on the contrary, to show that some tilings have no weak local rules.

\begin{figure}[hbtp]
\includegraphics[width=0.23\textwidth]{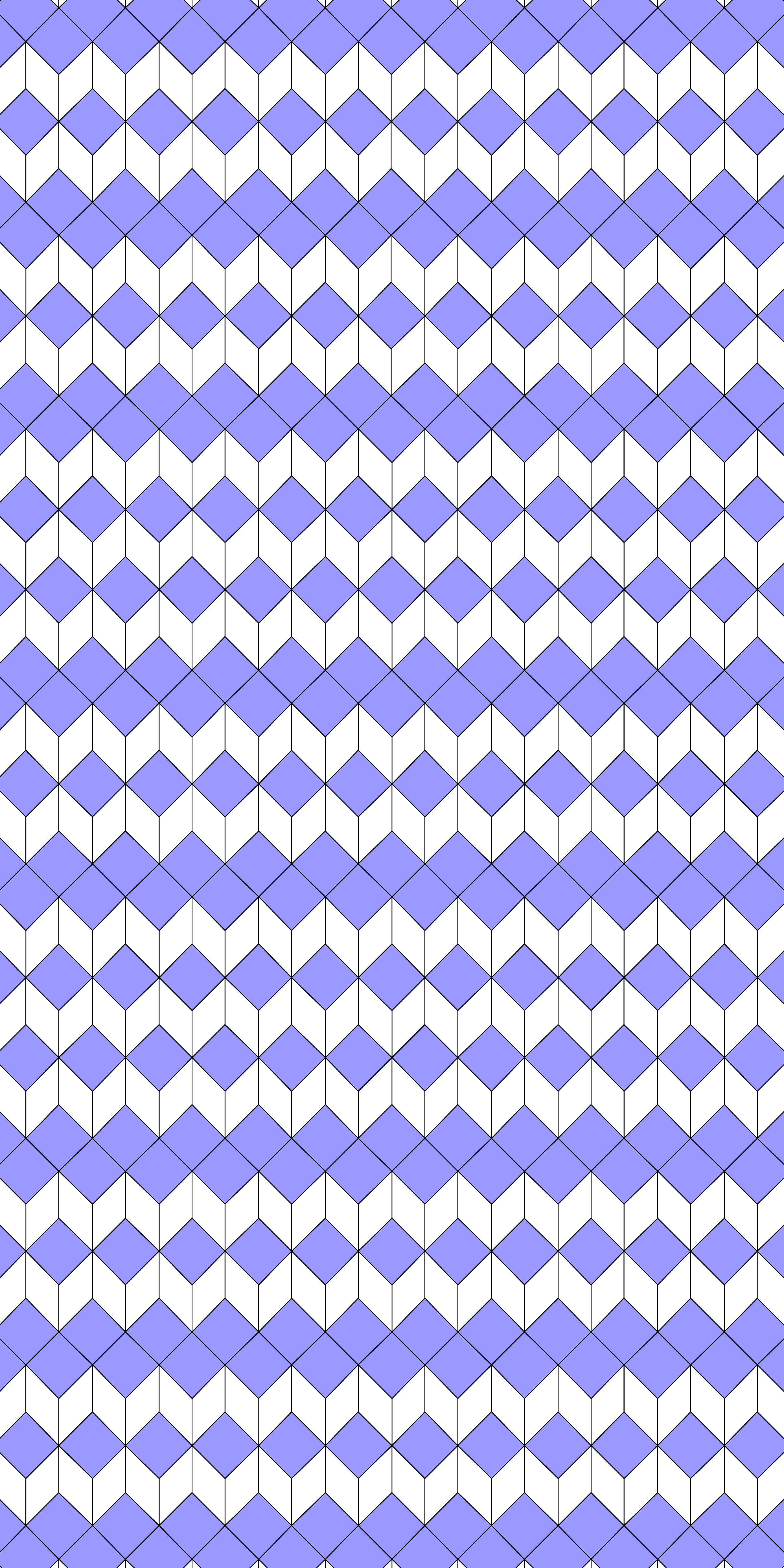}\hfill
\includegraphics[width=0.23\textwidth]{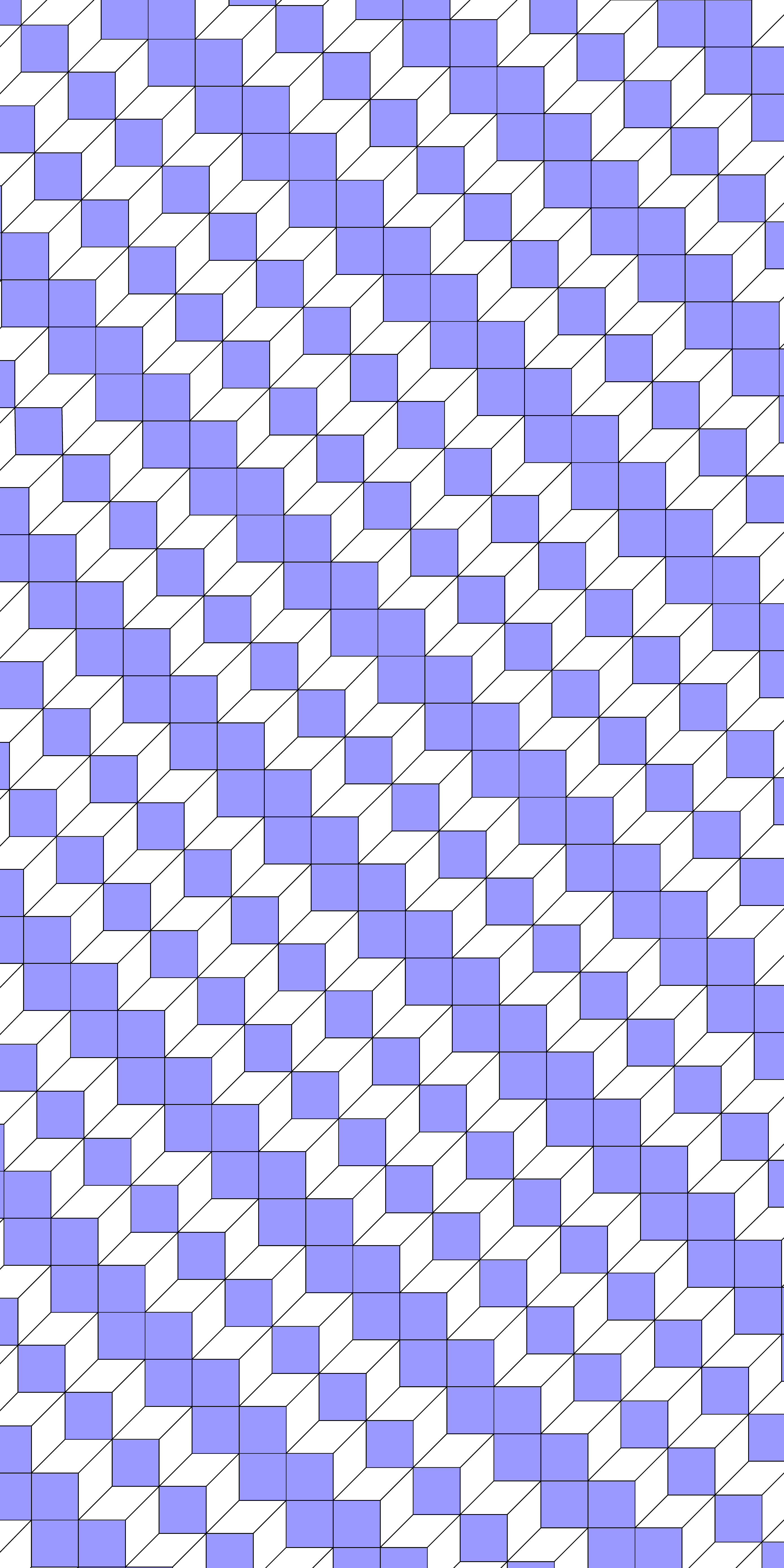}\hfill
\includegraphics[width=0.23\textwidth]{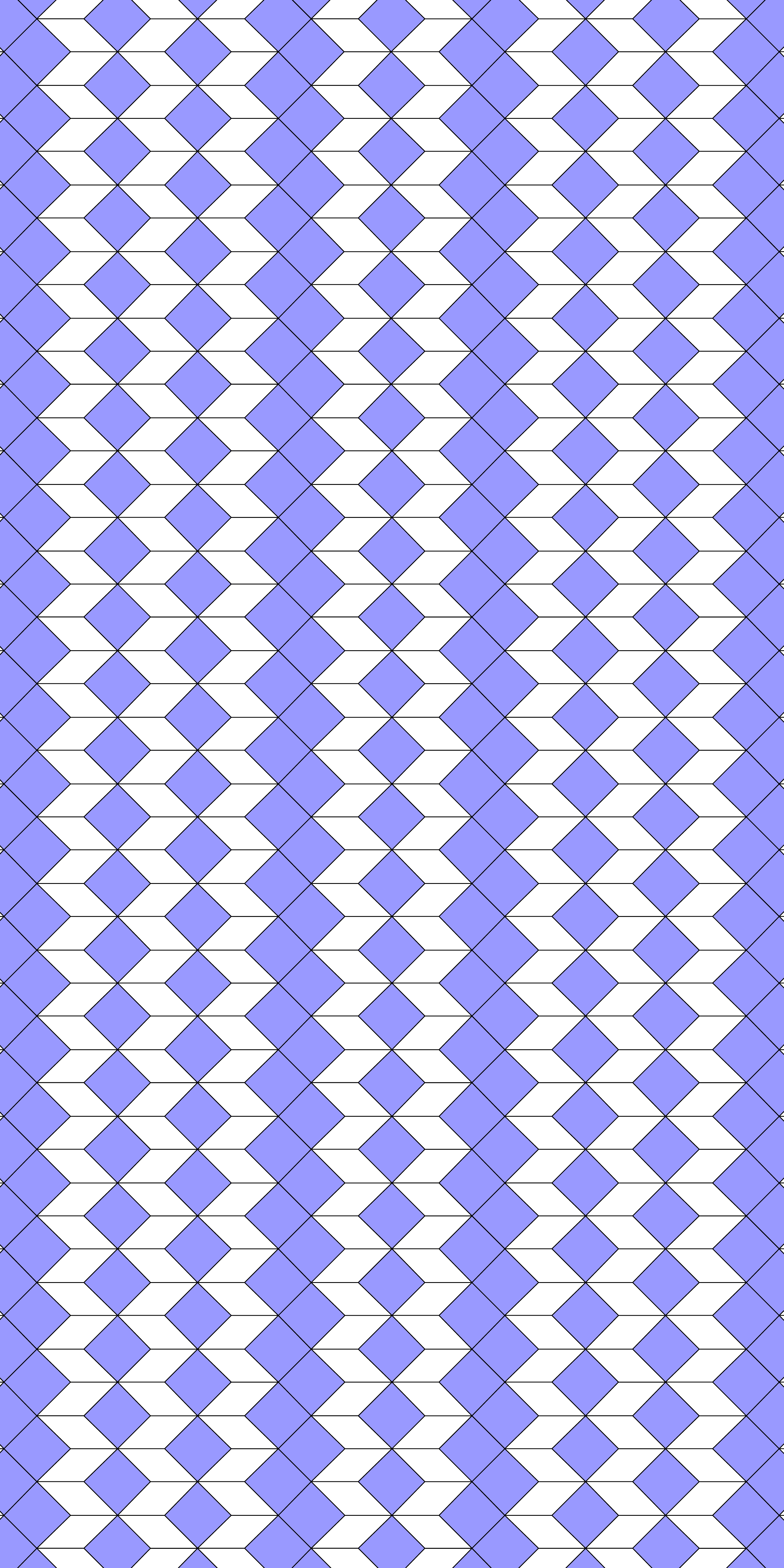}\hfill
\includegraphics[width=0.23\textwidth]{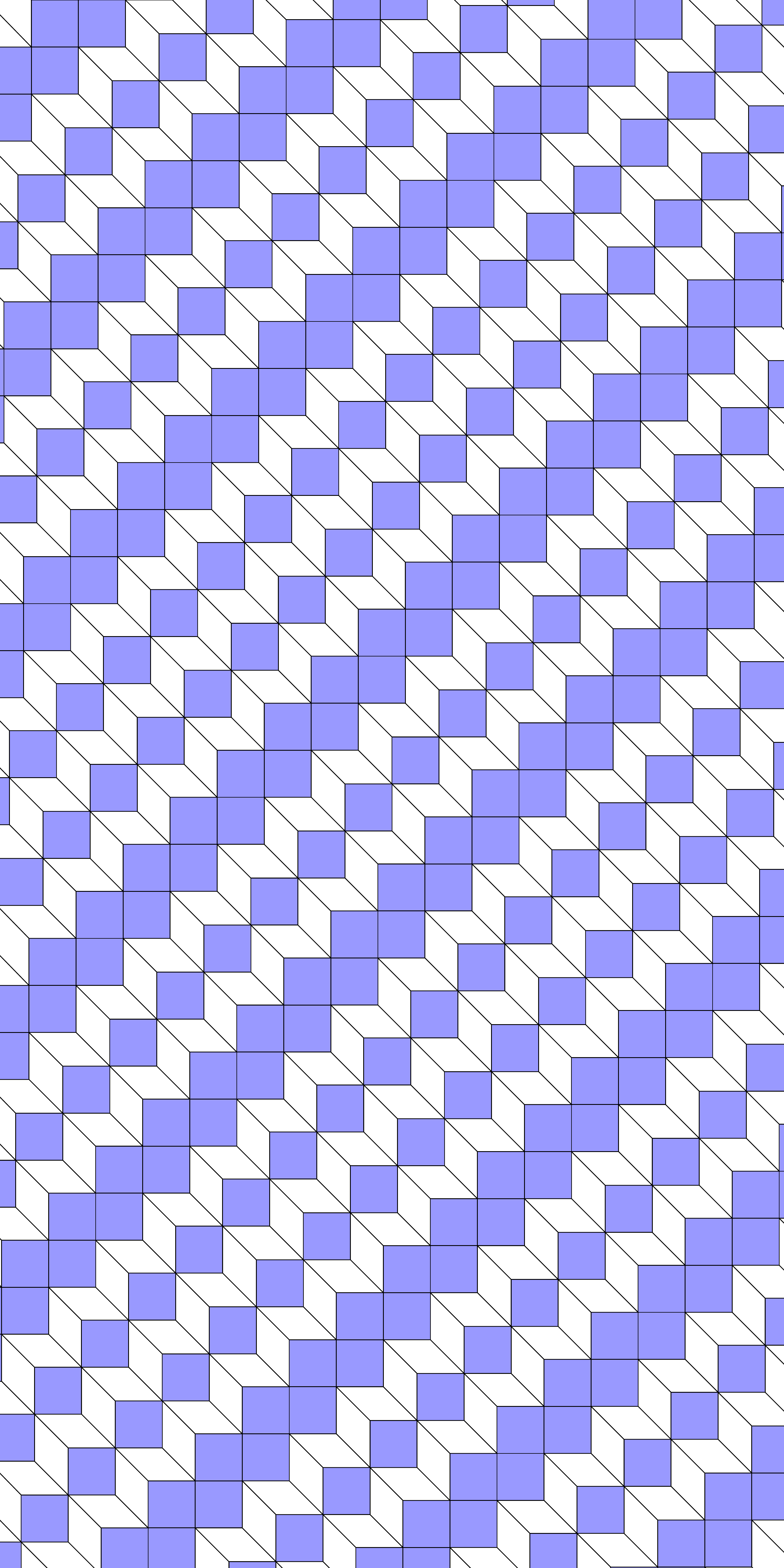}
\caption{The four shadows of an $8$-fold tiling. Each one is periodic.}
\label{fig:subperiods}
\end{figure}

\section{Subperiods of $4p$-fold tilings}
\label{sec:subperiods}

The following proposition is proven in \cite{subperiods}.
We recall it with its proof in order to make the subsequent result more precise.

\begin{proposition}
\label{prop:4p_fold_family1}
The slope of the $4p$-fold tilings belongs to a one-parameter fa\-mi\-ly of slopes which have at least the subperiods of the $4p$-fold tilings.
\end{proposition}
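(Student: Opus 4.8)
The plan is to exhibit, for each $p$, an explicit one-parameter family $(E_t)_t$ of planes in $\mathbb{G}(2,2p)$ containing the slope $E_0$ of the $4p$-fold tilings as one member, to check that every subperiod relation of $E_0$ is satisfied by all the $E_t$, and finally to check that each $E_t$ is the slope of some planar rhombus tiling of finite thickness.

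First I would record the Grassmann coordinates of $E_0$: with $\theta=\pi/(2p)$ one has $G_{ij}=\sin((j-i)\theta)$, so $G_{ij}$ depends only on the gap $g=j-i\in\{1,\dots,2p-1\}$, is invariant under $g\mapsto 2p-g$, and equals $1$ for the square-tile gap $g=p$. Then I would determine the subperiods: for a triple $i<j<k$ with gaps $\alpha=j-i$, $\beta=k-j$, a subperiod is a primitive $(a,b,c)\in\mathbb{Z}^3$ with $a\sin(\beta\theta)-b\sin((\alpha+\beta)\theta)+c\sin(\alpha\theta)=0$, and since these are sines of rational multiples of $\pi$ whose $\mathbb{Q}$-linear relations are governed by classical arithmetic of cyclotomic fields, one can list, for every triple, the (possibly trivial) lattice of integer solutions and hence the subperiod relations that a deformed slope must preserve. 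The key point of this bookkeeping is that these relations are too few to pin $E_0$ down: they leave a linear subspace of admissible coordinate tuples of projective dimension at least two.

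Next I would intersect that subspace with the Grassmann variety, i.e. impose the Plücker relations $G_{ij}G_{kl}=G_{ik}G_{jl}-G_{il}G_{jk}$. Because so many coordinates have been forced equal or proportional, these relations collapse to a system whose solution set through the point of $E_0$ is a smooth curve --- for $p=2$ it is the conic $G_{02}G_{13}=2G_{01}^2$ once the equalized coordinates are eliminated --- yielding the desired one-parameter family of coordinate tuples. I would realize it by explicit generating vectors, writing $E_t=\mathrm{span}(\vec{u}_t,\vec{v}_t)$; for $p=2$ one can take $\vec{u}_t=(1,0,-1,-2/t)$ and $\vec{v}_t=(0,1,t,1)$, so that $E_{\sqrt{2}}=E_0$ and $(G_{01},G_{02},G_{03},G_{12},G_{13},G_{23})=(1,t,1,1,2/t,1)$, the case of general $p$ being treated in the same spirit. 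This simultaneously verifies the Plücker relations and shows $E_t$ varies non-trivially with $t$; and since for $t$ near the $4p$-fold value $E_t$ has all Grassmann coordinates non-zero with the signs prescribed by $\det(\vec{v}_i,\vec{v}_j)$, it is the slope of a planar rhombus tiling of some finite thickness.

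The step I expect to be the main obstacle is the determination of the subperiods and the ensuing Plücker count: one must show precisely that, after imposing all subperiod relations of $E_0$, the Plücker relations leave exactly one free parameter --- not zero, which would make $E_0$ rigid, and not more. The reflection symmetry $g\mapsto 2p-g$ (responsible for the repeated Grassmann values) and the ``diagonal'' triples $\alpha=\beta$ (whose solution lattice jumps in rank) are the delicate points in that count.
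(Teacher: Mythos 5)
Your overall strategy---impose the subperiod relations of the $4p$-fold slope, intersect with the Plücker relations, and realize the resulting curve by explicit generating vectors---is the same as the paper's, and your $p=2$ computation is correct: it is exactly Burkov's family, with the conic $G_{02}G_{13}=2G_{01}^2$ after the equalized coordinates are eliminated. But for general $p$ the proposal stops precisely where the proof has to start. The assertion that ``the Plücker relations collapse to a system whose solution set through $E_0$ is a smooth curve'', with the general case ``treated in the same spirit'', \emph{is} the content of the proposition, and you yourself flag this count as the main obstacle; nothing in the proposal supplies a mechanism that makes it work for every $p$. Likewise, the preliminary step of classifying all integer relations among the sines $\sin(g\pi/2p)$ by cyclotomic arithmetic is both heavier than needed and not actually carried out, and the ``not more than one parameter'' part of your count is not even required by the statement (only the existence of a non-trivial family with at least the subperiods is).

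What is missing is the paper's reduction: after imposing the relations $G_{12}=G_{23}=\cdots$, $G_{13}=G_{35}=\cdots$, $G_{24}=G_{46}=\cdots$ and normalizing $G_{12}=1$, set $X=\tfrac12 G_{13}$, $Y=\tfrac12 G_{24}$ and $U_i=G_{1,i+2}$; the Plücker relations give the recurrence $U_0=1$, $U_1=2X$, $U_{2i}=2YU_{2i-1}-U_{2i-2}$, $U_{2i+1}=2XU_{2i}-U_{2i-1}$ (a bivariate variant of the Chebyshev recurrence), so that $U_{2p-2}=G_{1,2p}$ is a polynomial in the product $XY$ alone, the closing condition $U_{2p-2}=1$ constrains only $XY$, and $X$, $Y$ determine all remaining coordinates (Lemma~4 of the subperiods paper). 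Fixing $XY=\cos^2(\pi/2p)$ and letting $X$ (equivalently $t=G_{13}$) vary then produces, for every $p$ at once, the one-parameter family through the $4p$-fold slope, with the explicit bases of Proposition~\ref{prop:4p_fold_family2} certifying that each parameter value is a genuine plane. Without this recurrence (or an equivalent argument showing that the subperiod constraints pin down everything except one degree of freedom), your proof establishes the proposition only for $p=2$.
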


\begin{proof}
The following relations correspond to subperiods of the $4p$-fold tilings:
\begin{eqnarray*}
&&G_{12}=G_{23}=\ldots=G_{2p,2p+1},\\
&&G_{13}=G_{35}=\ldots=G_{2p-1,2p+1},\\
&&G_{24}=G_{46}=\ldots=G_{2p,2p+2},
\end{eqnarray*}
with the convention $G_{i,j+2p}=-G_{i,j}$ and $G_{ji}=-G_{ij}$.
We normalize to $G_{12}=1$ and introduce $X:=\frac{1}{2}G_{13}$, $Y:=\frac{1}{2}G_{24}$ and $U_i:=G_{1,i+2}$.
The Plücker relation
$$
G_{1,i}G_{i+1,i+2}=G_{1,i+1}G_{i,i+2}-G_{1,i+2}G_{i,i+1}
$$
yields the recurrence relation
$$
U_0=1,\quad
U_1=2X,\quad
U_{2i}=2Y U_{2i-1}-U_{2i-2},\quad
U_{2i+1}=2X U_{2i}-U_{2i-1}.
$$
This reminds us of the recurrence defining Chebyshev polynomials of the se\-cond kind.
Precisely, $U_i$ is obtained from the $i$-th Chebyshev polynomial of the second kind by replacing $X^{2k+1}$ by $X^{k+1}Y^k$ and $X^{2k}$ by $X^kY^k$.
In particular, $U_{2p-2}$ is a polynomial of $XY$, and since $U_{2p-2}=G_{1,2p}=G_{2p,2p+1}=1$, there are only finitely many possible values for $XY$.
One shows by induction using Plücker relations that $X$ and $Y$ determine all the other Grassmann coordinates (see \cite{subperiods}, Lem.~4).
The $4p$-fold tilings correspond to $G_{13}=G_{24}$, that is, $XY=\cos^2(\frac{\pi}{2p})$.
This value of $XY$ yields the wanted one-parameter family.
\end{proof}

\begin{figure}[hbtp]
\includegraphics[width=0.3\textwidth]{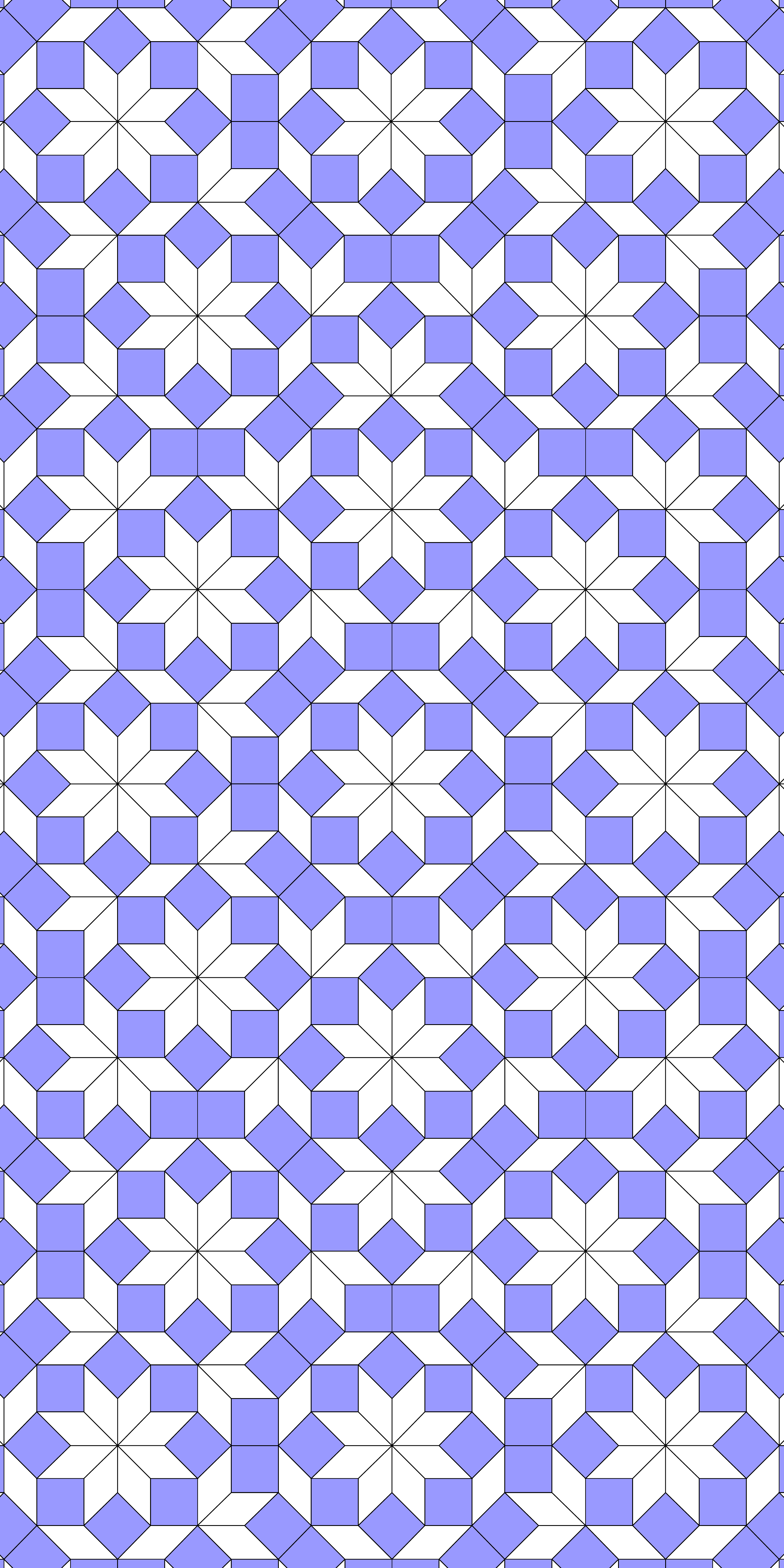}\hfill
\includegraphics[width=0.3\textwidth]{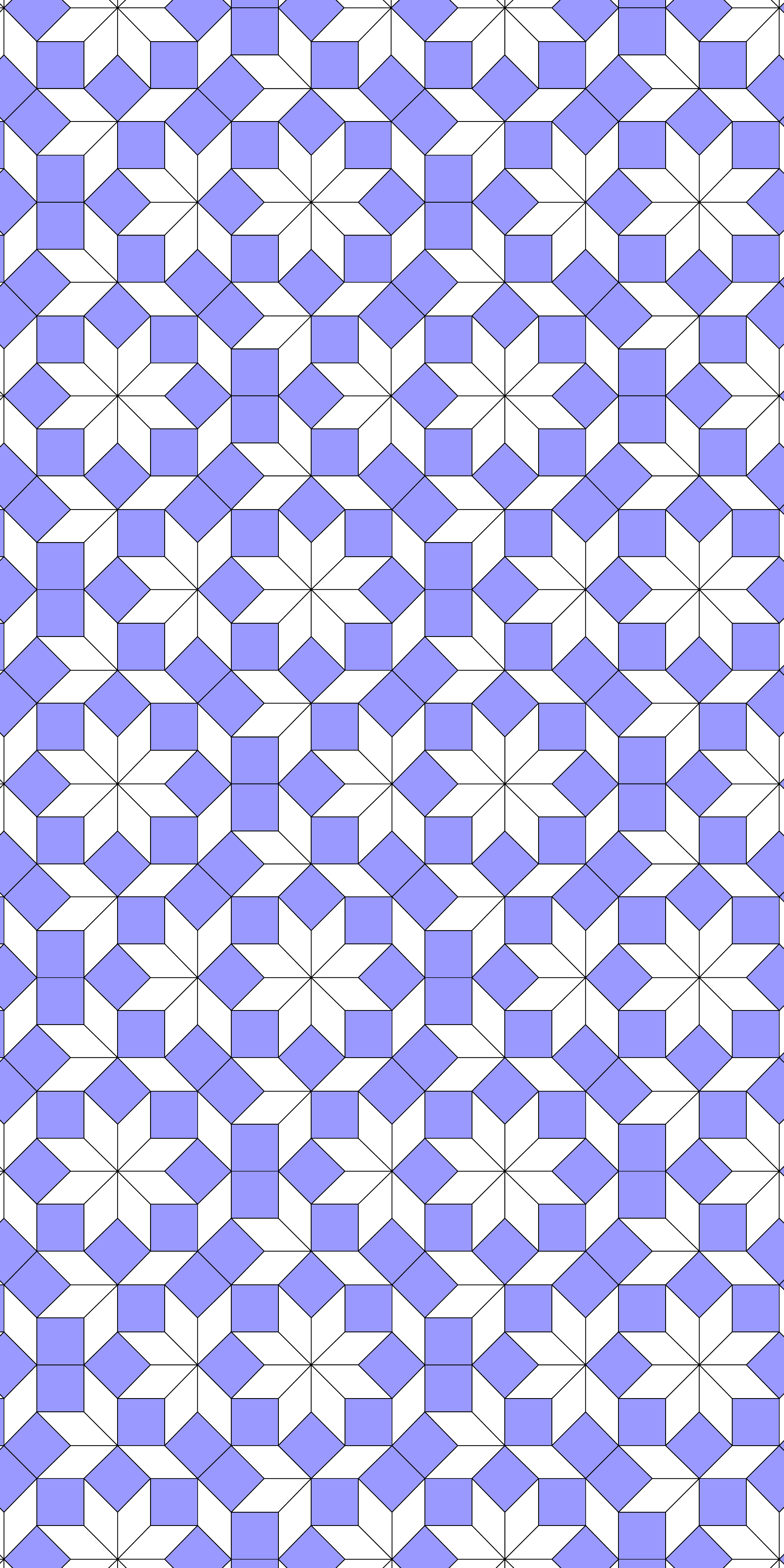}\hfill
\includegraphics[width=0.3\textwidth]{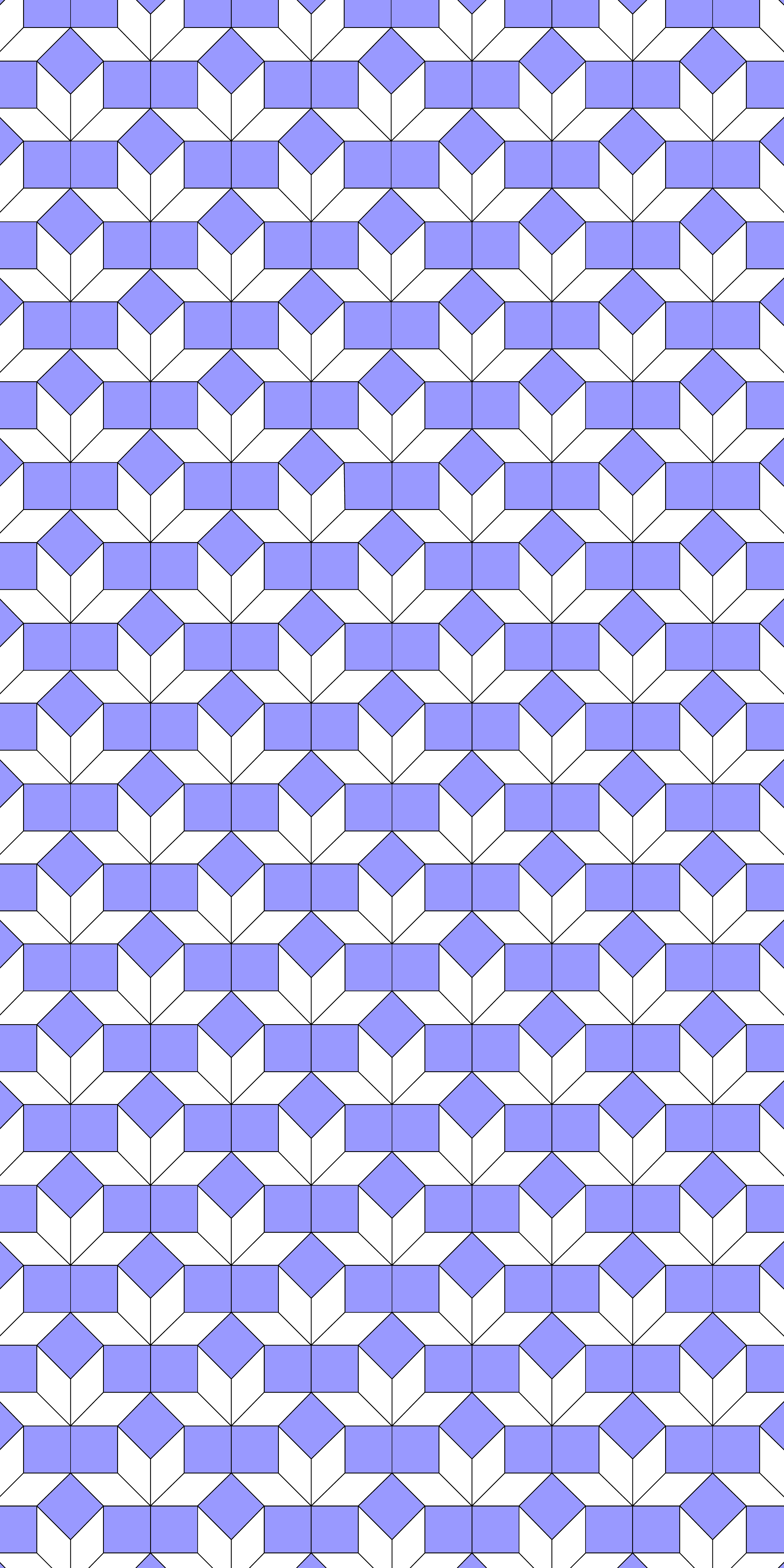}
\caption{
Some planar tilings with the same subperiods as the $8$-fold tilings.
The left one is a $8$-fold tiling and has slope $E_{t_2}=E_{\sqrt{2}}$.
The middle and the right ones respectively have slope $E_{\frac{3}{2}}$ and $E_1$.
They are not $8$-fold tilings, although the middle one has the same $0$-atlas as the $8$-fold tilings (compare with Fig.~\ref{fig:allowed_patterns}).
}
\label{fig:8_fold_tilings}
\end{figure}

Consider the one-parameter family of slopes found in Prop.~\ref{prop:4p_fold_family1}.
We denote by $E_t$ the slope with $G_{12}=1$ and $G_{13}=t$.
The $4p$-fold tilings thus correspond to $t=t_p:=2\cos(\frac{\pi}{2p})$.
Let us give a basis of $E_t$ that shall be useful.

\begin{proposition}
\label{prop:4p_fold_family2}
There are two vectors with entries in $\mathbb{Q}(t_p^2)$ such that, for any $t$, multiplying by $t$ their entries with an odd index\footnote{The first index is one.} yields a basis of $E_t$.
\end{proposition}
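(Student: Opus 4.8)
The plan is to pick the most convenient member of the family, the slope $E_1$, exhibit a basis of $E_1$ with entries in $\mathbb{Q}(t_p^2)$, and then transport this basis to every $E_t$ by a single diagonal linear map whose effect on odd-indexed coordinates is exactly multiplication by $t$.

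First I would check that $E_1$ admits a $\mathbb{Q}(t_p^2)$-rational basis. By the proof of Proposition~\ref{prop:4p_fold_family1}, once $G_{12}=1$ is fixed every Grassmann coordinate of a member of the family is a rational function of $X=\tfrac12 G_{13}$ and $Y=\tfrac12 G_{24}$, and on the family $XY=\cos^2(\tfrac{\pi}{2p})=\tfrac14 t_p^2$; for $E_1$ we have $X=\tfrac12$, hence $Y=\tfrac12 t_p^2$, so all Grassmann coordinates of $E_1$ lie in $\mathbb{Q}(t_p^2)$. Since $G_{12}\neq 0$, the vectors $u:=\vec{e}_1-\sum_{k\geq 3}G_{2k}\vec{e}_k$ and $v:=\vec{e}_2+\sum_{k\geq 3}G_{1k}\vec{e}_k$ then form a basis of $E_1$ with entries in $\mathbb{Q}(t_p^2)$; that they span $E_1$ follows by computing their Grassmann coordinates and using the Plücker relation $G_{12}G_{jk}=G_{1j}G_{2k}-G_{1k}G_{2j}$.

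Next, for $t\neq 0$ I would introduce the diagonal isomorphism $\phi_t$ of $\mathbb{R}^{2p}$ that multiplies the odd-indexed coordinates by $t$ and fixes the even-indexed ones, so that $\phi_t(u)$ and $\phi_t(v)$ are precisely $u$ and $v$ with their odd-indexed entries multiplied by $t$, and claim that $\phi_t(E_1)=E_t$. Under $\phi_t$ each $G_{ij}$ is multiplied by $t^{\varepsilon(i,j)}$, where $\varepsilon(i,j)\in\{0,1,2\}$ counts the odd indices among $i,j$. The key point is that each of the three subperiod chains of Proposition~\ref{prop:4p_fold_family1} is homogeneous for this grading: the chain $G_{12}=G_{23}=\cdots$ involves only the $G_{i,i+1}$, all with $\varepsilon=1$; the chain $G_{13}=G_{35}=\cdots$ only coordinates with $\varepsilon=2$; the chain $G_{24}=G_{46}=\cdots$ only coordinates with $\varepsilon=0$ (the conventions $G_{i,j+2p}=-G_{i,j}$ and $G_{ji}=-G_{ij}$ leave parities unchanged, since $2p$ is even). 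Hence $\phi_t$ preserves the subperiod relations, so $\phi_t(E_1)$ still satisfies them; and normalizing $G_{12}=1$ one computes directly $G_{13}(\phi_t(E_1))=t$ and $G_{24}(\phi_t(E_1))=t_p^2/t$, so $XY=\tfrac14 t_p^2$. By the uniqueness of the family member with a prescribed value of $G_{13}$ (Proposition~\ref{prop:4p_fold_family1}), this forces $\phi_t(E_1)=E_t$, and then $(\phi_t(u),\phi_t(v))$ is the desired basis of $E_t$. The only step requiring genuine care is this identification $\phi_t(E_1)=E_t$: one must verify that all three subperiod chains are parity-homogeneous — this is exactly what keeps the diagonal deformation inside the one-parameter family — and then invoke from Proposition~\ref{prop:4p_fold_family1} that $(X,Y)$ pins down a family member; the bookkeeping with the cyclic index conventions is the most error-prone part, though it is harmless because $2p$ is even.
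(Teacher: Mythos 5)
Your proposal is correct in substance but takes a genuinely different route from the paper. The paper proves, by induction on $j-i$ via the Plücker relations $G_{i,j-1}G_{i+1,j}-G_{ij}G_{i+1,j-1}=1$, that every Grassmann coordinate of $E_t$ lies in $\mathbb{Q}(t_p^2)$, $\mathbb{Q}(t_p^2)\,t$ or $\mathbb{Q}(t_p^2)/t$ according to the parities of $i$ and $j-i$, and then reads off the basis $(-G_{12},0,G_{23},\ldots,G_{2,2p})$, $(0,G_{12},G_{13},\ldots,G_{1,2p})$, rescaling to eliminate the $t$'s. You instead exploit a symmetry of the whole family: the three subperiod chains are homogeneous for the grading by the number of odd indices, so the diagonal map $\phi_t$ preserves them as well as the value of $XY$, and the uniqueness behind Proposition~\ref{prop:4p_fold_family1} (all coordinates determined by $X$ and $Y$) identifies $\phi_t(E_1)$ with $E_t$; a $\mathbb{Q}(t_p^2)$-rational basis of $E_1$ (which, up to sign, is the paper's basis specialized at $t=1$) then transports to every $E_t$. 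Your argument is more structural — it exhibits the family as the orbit of a single slope under a one-parameter scaling group, which explains why such a basis exists — and it avoids redoing the Plücker induction; the paper's computation is self-contained, treats all $t$ simultaneously, and yields in passing the explicit parity trichotomy for each $G_{ij}$. Two points you should make explicit: (i) your argument needs $t=1$ to be a bona fide nondegenerate member of the family, so that the rational formulas expressing the $G_{ij}$'s in terms of $X,Y$ (obtained by dividing by previously determined coordinates) are valid at $X=\tfrac12$, $Y=\tfrac12 t_p^2$ — this leans on \cite{subperiods}, Lem.~4, somewhat more heavily than the paper does; and (ii) nothing is special about $t=1$: should some coordinate vanish there, any rational parameter value avoiding the finitely many bad points works equally well, using $\phi_{t/t_0}$ instead of $\phi_t$. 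With these caveats spelled out, the proof is sound.
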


\begin{proof}
We keep the normalization $G_{12}=1$ and the parametrization $G_{13}=t$.
Let us show by induction on $j-i$ the following claim:
\begin{itemize}
\item if $j-i$ is odd, then $G_{ij}\in\mathbb{Q}(t_p^2)$;
\item if $j-i$ is even and $i$ is even, then $G_{ij}\in\mathbb{Q}(t_p^2)/t$;
\item if $j-i$ is even and $i$ is odd, then $G_{ij}\in\mathbb{Q}(t_p^2)t$.
\end{itemize}
This holds for $j-i\leq 2$ since $G_{i,i+1}=G_{12}=1$, $G_{2i+1,2i+3}=G_{13}=t$ and $G_{2i,2i+2}=G_{24}=t_p^2/t$ (because $G_{13}G_{24}=4XY=t_p^2$ in the proof of Prop.~\ref{prop:4p_fold_family1}).
Assume that this claim holds for $j-i<\delta$ and consider $i$ and $j$ such that $j-i=\delta$.
We rely on the Plücker relation
$$
G_{i,j-1}G_{i+1,j}-G_{ij}G_{i+1,j-1}=G_{i,i+1}G_{j-1,j}=1.
$$
\begin{itemize}
\item if $j-i$ is even and $i$ is even:
$$
\underbrace{G_{i,j-1}}_{\mathbb{Q}(t_p^2)}\underbrace{G_{i+1,j}}_{\mathbb{Q}(t_p^2)}-G_{ij}\underbrace{G_{i+1,j-1}}_{\mathbb{Q}(t_p^2)t}=1.
$$
\item if $j-i$ is even and $i$ is odd:
$$
\underbrace{G_{i,j-1}}_{\mathbb{Q}(t_p^2)}\underbrace{G_{i+1,j}}_{\mathbb{Q}(t_p^2)}-G_{ij}\underbrace{G_{i+1,j-1}}_{\mathbb{Q}(t_p^2)/t}=1.
$$
\item if $j-i$ is odd, with $\varepsilon=1$ if $i$ is odd or $\varepsilon=-1$ otherwise:
$$
\underbrace{G_{i,j-1}}_{\mathbb{Q}(t_p^2)t^\varepsilon}\underbrace{G_{i+1,j}}_{\mathbb{Q}(t_p^2)/t^\varepsilon}-G_{ij}\underbrace{G_{i+1,j-1}}_{\mathbb{Q}(t_p^2)}=1.
$$
\end{itemize}
In any case, the claim holds for $G_{ij}$, hence by induction for any $i<j$.\\
Now, consider the two following vectors
$$
(-G_{12},0,G_{23},G_{24},\ldots,G_{2,2p})
\qquad\textrm{and}\qquad
(0,G_{12},G_{13},\ldots,G_{1,2p}).
$$
One checks that they form a basis of $E_t$.
We get the two wanted vectors by multiplying by $t$ the even entries of the first vector and by dividing by $t$ the odd entries of the second vector.
\end{proof}

\noindent Let us illustrate this for the first values of $p$:
\begin{itemize}
\item
For $p=2$, consider the vectors
$$
\vec{u}_2:=(-1,0,1,2)
\qquad\textrm{and}\qquad
\vec{v}_2:=(0,1,1,1).
$$
Both have entries in $\mathbb{Q}(t_2^2)=\mathbb{Q}$.
Multiplying by $t$ their odd entries yields the following basis of $E_t$
$$
\vec{u}_2(t):=(-t,0,t,2)
\qquad\textrm{and}\qquad
\vec{v}_2(t):=(0,1,t,1).
$$
The $8$-fold tilings have slope $E_{t_2}=E_{\sqrt{2}}$.
\item
For $p=3$, consider the vectors
$$
\vec{u}_3:=(-1,0,1,3,2,3)
\qquad\textrm{and}\qquad
\vec{v}_3:=(0,1,1,2,1,1).
$$
Both have entries in $\mathbb{Q}(t_3^2)=\mathbb{Q}$.
Multiplying by $t$ their odd entries yields a basis of $E_t$.
The $12$-fold tilings have slope $E_{t_3}=E_{\sqrt{3}}$.
\item
For $p=4$, consider the two vectors
$$
\vec{u}_4:=(-1,0,1,2+\sqrt{2},1+\sqrt{2},1+\sqrt{2},1+\sqrt{2},2+\sqrt{2})
$$
$$
\textrm{and}\qquad \vec{v}_4:=(0,1,1,1+\sqrt{2},\sqrt{2},1+\sqrt{2},1,1).
$$
Both have entries in $\mathbb{Q}(t_4^2)=\mathbb{Q}(\sqrt{2})$.
Multiplying by $t$ their odd entries yields a basis of $E_t$.
The $16$-fold tilings have slope $E_{t_4}=E_{\sqrt{2+\sqrt{2}}}$.
\end{itemize}
\section{In the window}
\label{sec:window}

Let us first briefly recall how the shape of the patterns of a planar tiling is governed by the way the vertices of its lift project onto the space orthogonal to its slope (also called {\em internal space}).
We follow \cite{julien}, where more details as well as proofs of the results here recalled can be found.\\

Let $E\in\mathbb{G}(2,n)$ be a two-dimensional plane in $\mathbb{R}^n$.
The orthogonal projection of the unit hypercube $[0,1]^n$ onto $E^\bot$ is called the {\em window}.
The vertices of the lifts of planar tilings of slope $E$ and thickness $1$ are precisely the points of $\mathbb{Z}^n$ whose orthogonal projection onto $E^\bot$ lies in the window.
Then, let $S_k$ be the set of the unit faces of $\mathbb{Z}^n$ of dimension $n-3$ lying in $[0,k]^n$.
The orthogonal projection of $S_k$ onto $E^\bot$ yields a union of codimension $1$ faces which divide the window in convex polytopes.
There is a bijective correspondance between these polytopes and the patterns of the planar tilings of slope $E$ and thickness $1$.
Namely, given a vertex $x$ of the lift of such a tiling, the restriction of this lift to $x+[-k,k]^n$ depends only on the convex polytope the orthogonal projection of $x$ onto $E^\bot$ falls in.
Fig.~\ref{fig:window_patterns} illustrates this in the $n=4$ case with an $8$-fold tiling.\\

\begin{figure}[hbtp]
\centering
\includegraphics[width=0.9\textwidth]{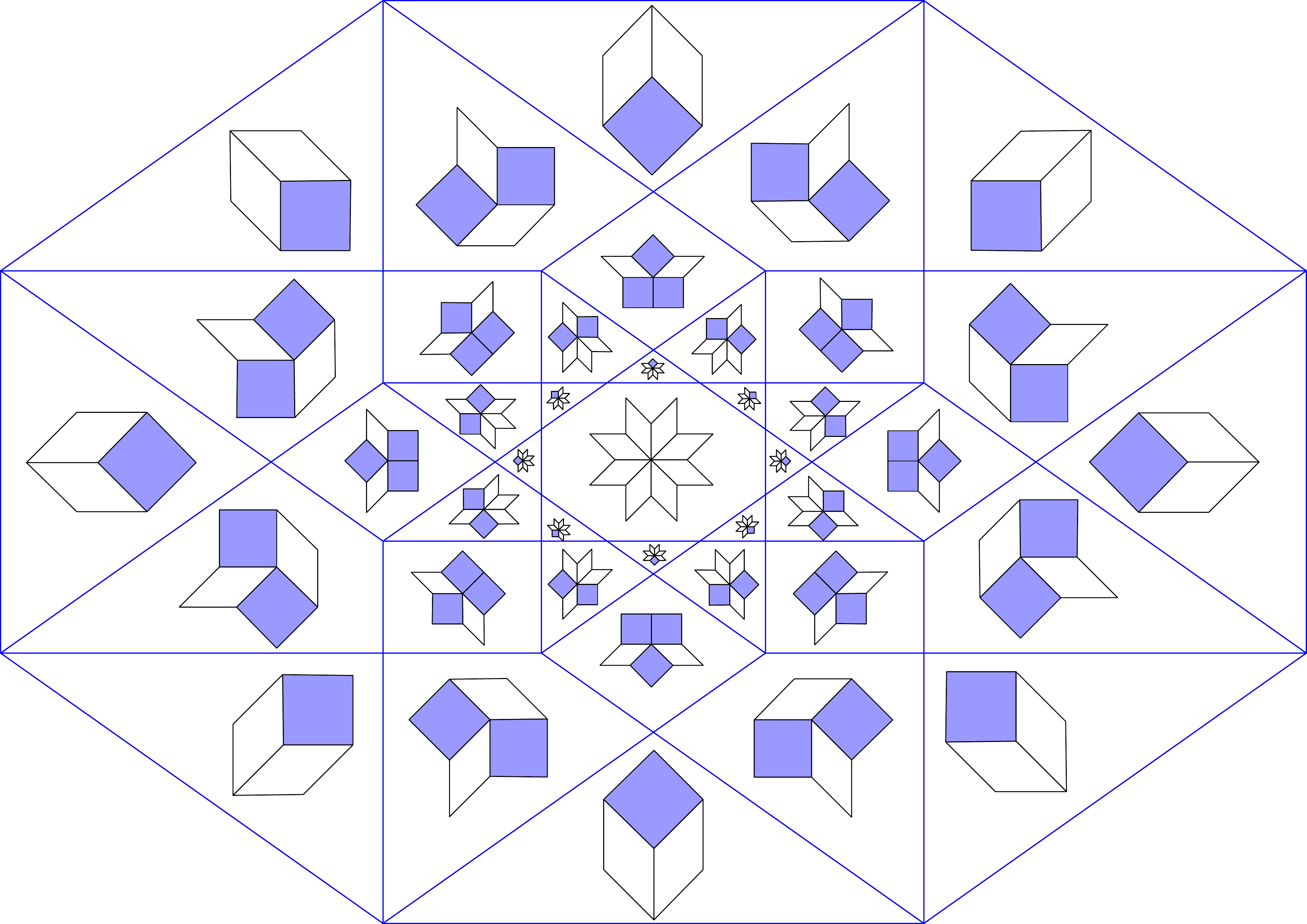}
\caption{
The division of the window by $S_1$ for an $8$-fold tiling of slope $E_{\sqrt{2}}$.
Whenever a vertex projects orthogonally onto $E_{\sqrt{2}}^\bot$ into one of these regions, its orthogonal projection onto $E_{\sqrt{2}}$ is the center of the $0$-map drawn in this region.
}
\label{fig:window_patterns}
\end{figure}

We are interested in how the patterns are modified when the slope varies, that is, what happens in the window.
The notion of {\em coincidence} shall be useful:

\begin{definition}\label{def:coincidence}
A {\em coincidence} of $E\in\mathbb{G}(2,n)$ is a set of $n-1$ unit faces of $\mathbb{Z}^n$ of dim. $n-3$ whose orthogonal projections onto $E^\bot$ have a non-empty intersection.
\end{definition}

\noindent The following proposition is illustrated in the $n=4$ case by Figure~\ref{fig:window_move}:

\begin{proposition}\label{prop:coincidence}
Let $E$ be a plane in $\mathbb{G}(2,n)$.
Assume that $E$ belongs to a curve of $\mathbb{G}(2,n)$ such that any coincidence of $E$ is also a coincidence of the points of this curve which are close enough to $E$.
Then $E$ does not admit weak local rules.
\end{proposition}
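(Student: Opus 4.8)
The plan is to derive a contradiction from the definition of weak local rules by combining the structure of the window with the hypothesis that coincidences persist along the curve. Suppose, for contradiction, that $E$ admits weak local rules. Then there are $r\geq 0$ and $t\geq 1$ such that every tiling $\mathcal{T}$ with $\mathcal{T}(r)\subset\mathcal{P}(r)$ is planar with slope $E$ and thickness at most $t$, where $\mathcal{P}$ is a thickness $1$ planar tiling of slope $E$. Since an $r$-map is determined by a restriction of a lift to a box $x+[-k,k]^n$ for $k$ depending only on $r$, the $r$-atlas $\mathcal{P}(r)$ corresponds, via the results of \cite{julien} recalled above, to the finite collection of convex polytopes cut out in the window $W_E$ of $E$ by the projection of $S_k$. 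The key point I would establish is that these polytopes all have nonempty interior precisely because $E$ is in ``general position'' with respect to $S_k$ up to the coincidences it does admit: a polytope of the subdivision degenerates (has empty interior) exactly when more than the generic number of projected faces of $S_k$ meet, which is a coincidence in the sense of Definition~\ref{def:coincidence}.

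Next I would use the hypothesis on the curve. Let $E'$ be a plane on the curve close to $E$. Every coincidence of $E$ is a coincidence of $E'$ by assumption, so no polytope of the subdivision of $W_E$ by $S_k$ splits into new full-dimensional cells as we pass from $E$ to $E'$: the projected faces of $S_k$ that met at a coincidence point of $E$ still meet at a (nearby) common point for $E'$. Consequently, for $E'$ close enough to $E$, the combinatorial type of the $S_k$-subdivision of the window $W_{E'}$ is the same as that of $W_E$, hence $\mathcal{P}'(r)$, the $r$-atlas of any thickness $1$ planar tiling $\mathcal{P}'$ of slope $E'$, satisfies $\mathcal{P}'(r)\subset\mathcal{P}(r)$. (More carefully: each $r$-map of $\mathcal{P}'$ arises from a polytope of the $W_{E'}$-subdivision, which matches a polytope of the $W_E$-subdivision, which yields the same $r$-map in $\mathcal{P}$.) I would need to check the direction of this inclusion goes the right way, using that persistence of coincidences means the $E'$-subdivision is a ``coarsening compatible'' refinement that produces no patterns absent from $\mathcal{P}$.

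Then the weak local rule forces $\mathcal{P}'$ to have slope $E$, contradicting $E'\neq E$, provided we can exhibit such a $\mathcal{P}'$ with nonempty window, i.e. provided $E'$ is the slope of an actual thickness $1$ planar tiling. This is the step I expect to be the main obstacle: one must argue that planes $E'$ on the curve, arbitrarily close to $E$, genuinely are slopes of thickness $1$ planar tilings — equivalently, that their windows contain lattice-projection points realizing full tilings — rather than merely abstract elements of $\mathbb{G}(2,n)$. For the application to the $4p$-fold tilings this will follow because the curve $E_t$ of Proposition~\ref{prop:4p_fold_family1}, together with the explicit rational-type basis of Proposition~\ref{prop:4p_fold_family2}, stays within the regime where thickness $1$ tilings exist for $t$ near $t_p$; but in the generality of Proposition~\ref{prop:coincidence} I would phrase the conclusion as: if additionally points of the curve near $E$ are slopes of thickness $1$ planar tilings, the above yields tilings with $r$-atlas inside $\mathcal{P}(r)$ but slope $\neq E$ for every $r$, so no finite $r$ can characterize the slope, i.e. $E$ admits no weak local rules. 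The remaining routine verification is the bookkeeping that the box size $k$ absorbing a given $r$ is uniform and that ``close enough to $E$'' can be chosen after $k$ (hence after $r$) is fixed, which is fine since only finitely many faces of $S_k$ are involved.
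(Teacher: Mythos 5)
Your overall strategy is the paper's: work in the window, use the subdivision by the projected faces of $S_k$ and its correspondence with the size-$k$ patterns, and use persistence of coincidences along the curve to produce, for every $k$, planar tilings of a nearby slope $E'$ all of whose patterns occur among those of $E$, which rules out weak local rules. The gap is that the pivotal step is asserted rather than proved: you claim that, because every coincidence of $E$ persists at $E'$, ``no polytope splits into new full-dimensional cells'' and even that the two subdivisions have the same combinatorial type --- but justifying precisely this is the whole content of the proposition. The paper argues it by contradiction: if a size-$k$ pattern occurred for $E'$ but not for $E$, the corresponding cell in the window of $E'$ would have its interior vanish as $E'$ tends to $E$ along the curve; since its vertices are intersections of $n-2$ projected faces of $S_k$, at the degeneration a vertex must enter an additional face, producing $n-1$ concurrent projections of $(n-3)$-dimensional faces, i.e.\ a coincidence of $E$ (Definition~\ref{def:coincidence}) which is not a coincidence of the nearby $E'$, where the cell is still full-dimensional --- contradicting the hypothesis. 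Without some such degeneration analysis, your ``new cells arise only by splitting a coincidence'' mechanism is unsupported. Note also that the identical-combinatorial-type claim is stronger than the one-way hypothesis warrants (coincidences of $E'$ need not be coincidences of $E$); fortunately only the inclusion $\mathcal{P}'(r)\subset\mathcal{P}(r)$ is needed, and that is exactly the direction the hypothesis, via the argument above, delivers.

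The obstacle you single out at the end is not one: for any slope in $\mathbb{G}(2,n)$, the cut-and-project construction recalled in Section~\ref{sec:window} (take the points of $\mathbb{Z}^n$ whose projection onto $E'^\bot$ lies in the window, with an offset chosen so that no lattice point projects onto its boundary) yields thickness-$1$ planar tilings of that slope; this is implicitly used throughout the paper (see Fig.~\ref{fig:8_fold_tilings}). So no additional hypothesis that nearby points of the curve are realizable as slopes is needed, and the proposition should not be weakened to a conditional statement.
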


\begin{proof}
Let $(E_t)_t$ be such a curve, with $E=E_0$.
Fix $k>0$.
The hypotheses ensure that for $t$ small enough, all the (finetely many) coincidences of $E$ formed by faces in $S_k$ are coincidences of $E_t$.
Assume that there is a pattern of size $k$ which appears in $E_t$ but not in $E$.
The corresponding connected component in the window of $E_t$ thus shrinks when $t$ decreases until its interior vanishes for $t=0$.
This connected component is a polytope in a $(n-2)$-dimensional space: its faces are projections of faces in $S_k$ and its vertices are intersections of $n-2$ such faces.
These vertices move with $t$ until entering a new face when the interior of the polytope vanishes for $t=0$.
This yields $n-1$ intersecting face which are the projections of unit faces of $\mathbb{Z}^n$ of dimension $n-3$, that is, a new coincidence for $t=0$.
Since the hypotheses prevent that, this means that any pattern of size $k$ of $E$ also appears in $E_t$.
Thus, the planar tilings of slope $E$ and $E_t$ cannot be distinguished by such patterns.
Since this holds for any $k$, this ensures that $E$ does not admit weak local rules.
\end{proof}

\begin{figure}[hbtp]
\includegraphics[width=0.98\textwidth]{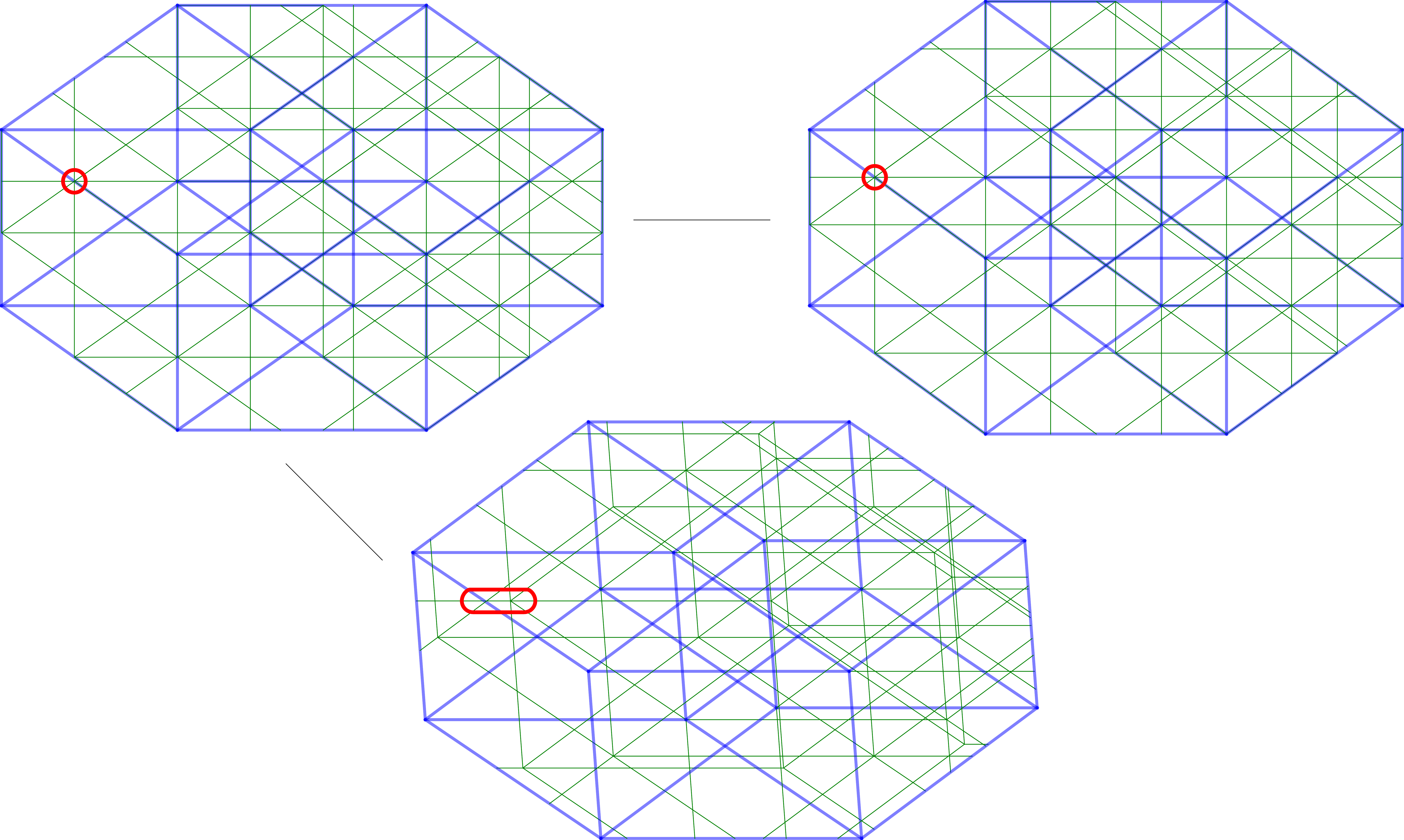}
\caption{
Top-left, the division by $S_2$ of the window of a $8$-fold tiling, with a circled coincidence.
Top-right, this coincidence is preserved by slightly moving the slope {\em along} the curve of the slopes having the same subperiods.
Bottom, the coincidence breaks by slightly moving the slope {\em transversally} to this curve.
}
\label{fig:window_move}
\end{figure}

\section{Coincidences of $4p$-fold tilings}
\label{sec:coincidences}

We here prove Theorem~\ref{th:main} by showing (Lemma~\ref{lem:4p_family_coincidences}) that the one-parameter family of planar tilings with the same subperiods as the $4p$-fold tilings (Proposition~\ref{prop:4p_fold_family1}) forms a curve of $\mathbb{G}(2,2p)$ which fulfills the hypotheses of Proposition~\ref{prop:coincidence}.
We first need an algebraic lemma which shall be used in the proof of Lemma~\ref{lem:4p_family_coincidences}

\begin{lemma}
\label{lem:incommensurables}
For $p\geq 2$, the parameter $t_p=2\cos(\frac{\pi}{2p})$ does not belong to $\mathbb{Q}(t_p^2)$.
\end{lemma}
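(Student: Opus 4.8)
The plan is to show that $[\mathbb{Q}(t_p):\mathbb{Q}(t_p^2)]=2$ for every $p\geq 2$; since $t_p\notin\mathbb{Q}(t_p^2)$ is exactly the statement that this degree is not $1$, it suffices to prove $t_p$ is not rational over $\mathbb{Q}(t_p^2)$. The natural approach is via a ramification/parity argument in the cyclotomic field. Write $\zeta=e^{i\pi/2p}$, a primitive $4p$-th root of unity, so that $t_p=\zeta+\zeta^{-1}$ generates the maximal real subfield $\mathbb{Q}(\zeta+\zeta^{-1})$ of $\mathbb{Q}(\zeta)$, which has degree $\varphi(4p)/2$ over $\mathbb{Q}$. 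One then needs to identify $\mathbb{Q}(t_p^2)$ inside this field and compare degrees. Concretely, $t_p^2 = 2 + \zeta^2 + \zeta^{-2} = 2 + 2\cos(\pi/p)$, so $\mathbb{Q}(t_p^2)=\mathbb{Q}(\cos(\pi/p))=\mathbb{Q}(\omega+\omega^{-1})$ where $\omega=\zeta^2=e^{i\pi/p}$ is a primitive $2p$-th root of unity. Thus $\mathbb{Q}(t_p^2)$ is the maximal real subfield of $\mathbb{Q}(\omega)=\mathbb{Q}(e^{i\pi/p})$, of degree $\varphi(2p)/2$ over $\mathbb{Q}$ (for $p\geq 2$; the cases $p=2,3$ where $\varphi/2$ would be small can be checked directly, e.g. $t_2=\sqrt2\notin\mathbb{Q}$, $t_3=\sqrt3\notin\mathbb{Q}$).

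The key step is then the purely arithmetic comparison $\varphi(4p)=2\varphi(2p)$ for all $p\geq 2$, which would give $[\mathbb{Q}(t_p):\mathbb{Q}]=2[\mathbb{Q}(t_p^2):\mathbb{Q}]$ and hence $[\mathbb{Q}(t_p):\mathbb{Q}(t_p^2)]=2$, as desired. To see $\varphi(4p)=2\varphi(2p)$: write $p=2^a m$ with $m$ odd. If $a=0$ (so $p$ odd), then $\varphi(4p)=\varphi(4)\varphi(p)=2\varphi(p)$ and $\varphi(2p)=\varphi(2)\varphi(p)=\varphi(p)$, giving the ratio $2$. If $a\geq 1$, then $4p=2^{a+2}m$ and $2p=2^{a+1}m$, so $\varphi(4p)=2^{a+1}\varphi(m)$ and $\varphi(2p)=2^{a}\varphi(m)$, again ratio $2$. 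So the identity holds unconditionally for $p\geq 2$, and the only genuine content is the (standard) fact that $t_p$ generates $\mathbb{Q}(e^{i\pi/2p})^+$ and $t_p^2$ generates $\mathbb{Q}(e^{i\pi/p})^+$, together with the degree formula $[\mathbb{Q}(\zeta_N)^+:\mathbb{Q}]=\varphi(N)/2$ for $N\geq 3$.

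An alternative, more elementary route avoiding the explicit degree formula: argue directly that $t_p$ satisfies a quadratic polynomial over $\mathbb{Q}(t_p^2)$, namely trivially $x^2 - t_p^2 = 0$, so the degree is $1$ or $2$; then rule out degree $1$ by noting that if $t_p\in\mathbb{Q}(t_p^2)\subset\mathbb{R}$ then $\mathbb{Q}(t_p)=\mathbb{Q}(t_p^2)$, so $\varphi(4p)/2=\varphi(2p)/2$, contradicting $\varphi(4p)=2\varphi(2p)$ for $p\geq2$. I expect the main obstacle to be not the algebra but making the identification $\mathbb{Q}(t_p^2)=\mathbb{Q}(e^{i\pi/p})^+$ rigorous and handling the small cases $p=2,3$ where $\varphi(2p)/2=1$ — there one should simply observe $t_2^2=2$, $t_3^2=3$ are not squares in $\mathbb{Q}$, so $t_p\notin\mathbb{Q}=\mathbb{Q}(t_p^2)$ directly.
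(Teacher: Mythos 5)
Your proof is correct and follows essentially the same route as the paper: both identify $\mathbb{Q}(t_p^2)=\mathbb{Q}(\cos(\pi/p))$, use the degree $\varphi(2m)/2$ of $\cos(\pi/m)$ (equivalently, of the maximal real subfield of the cyclotomic field), and derive the contradiction from $\varphi(4p)=2\varphi(2p)$. The only differences are cosmetic: you prove the totient identity explicitly and compare the degrees of $\mathbb{Q}(t_p)$ and $\mathbb{Q}(t_p^2)$ directly, while the paper uses the identity implicitly and phrases the conclusion as ``the degree of an element divides the degree of the extension.''
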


\begin{proof}
Since $t_p^2=4\cos^2(\frac{\pi}{2p})=2+2\cos(\frac{\pi}{p})$, let us show $\cos(\frac{\pi}{2p})\notin\mathbb{Q}(\cos(\frac{\pi}{p}))$.
Recall that $\cos(\frac{\pi}{p})$ is an algebraic number of degree $\frac{\varphi(2p)}{2}$, where $\varphi$ is the Euler's totient function.
The algebraic degree of $\cos(\frac{\pi}{2p})$ is thus $\frac{\varphi(4p)}{2}=\varphi(2p)$.
It does not divide $\frac{\varphi(2p)}{2}$.
The result follows since the algebraic degree of any element in a field extension divides the algebraic degree of this extension.
\end{proof}

\begin{lemma}\label{lem:4p_family_coincidences}
A coincidence of $E_{t_p}$ is a coincidence of $E_t$ for $t$ close enough to $t_p$.
\end{lemma}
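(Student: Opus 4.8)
The plan is to translate the existence of a coincidence into the solvability of a linear system whose coefficient matrix, after a suitable rescaling of the coordinates, does not depend on $t$, and then to propagate solvability from $t=t_p$ to nearby $t$ by means of Lemma~\ref{lem:incommensurables} (here $n=2p$, so a coincidence consists of $2p-1$ faces). First I would record the algebraic meaning of a coincidence. A unit face of dimension $2p-3$ of $\mathbb{Z}^{2p}$ with translation $\tau\in\mathbb{Z}^{2p}$ and ``missing'' directions $i<j<k$ has affine hull $\{x\in\mathbb{R}^{2p}\mid x_i=\tau_i,\ x_j=\tau_j,\ x_k=\tau_k\}$, and its orthogonal projection onto $E_t^\bot$ contains a point $y$ iff the affine plane $y+E_t$ meets this affine hull. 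Fixing a basis $(\vec u(t),\vec v(t))$ of $E_t$, this is a system of three affine equations in two unknowns, solvable iff a $3\times 3$ determinant vanishes, which (its relevant $2\times 2$ minors being exactly the Grassmann coordinates $u_a(t)v_b(t)-u_b(t)v_a(t)=G_{ab}(t)$) expands into
$$
G_{jk}(t)\,(x_i-\tau_i)-G_{ik}(t)\,(x_j-\tau_j)+G_{ij}(t)\,(x_k-\tau_k)=0,
$$
for $x$ any point of $y+E_t$; the relation is homogeneous in the $G_{ij}(t)$, so any normalization of them is allowed. Hence a coincidence of $E_t$, carried by $2p-1$ faces with triples $\{i_m,j_m,k_m\}$ and translations $\tau^{(m)}$, is equivalent to the existence of a single $x\in\mathbb{R}^{2p}$ satisfying the $2p-1$ resulting equations, i.e. to the solvability of a linear system $A(t)x=b(t)$.

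The heart of the argument is a rescaling. By Proposition~\ref{prop:4p_fold_family2} we may take for $E_t$ the basis obtained from two fixed vectors $\vec u,\vec v\in\mathbb{Q}(t_p^2)^{2p}$ by multiplying the odd-index entries by $t$; setting $\epsilon_\ell=1$ for $\ell$ odd and $\epsilon_\ell=0$ for $\ell$ even, this yields $G_{ij}(t)=t^{\epsilon_i+\epsilon_j}\tilde G_{ij}$ with $\tilde G_{ij}:=u_iv_j-u_jv_i\in\mathbb{Q}(t_p^2)$ independent of $t$. Dividing the $m$-th equation by $t^{\epsilon_{i_m}+\epsilon_{j_m}+\epsilon_{k_m}}$ and substituting $z_\ell:=t^{-\epsilon_\ell}x_\ell$ and $\rho^{(m)}_\ell:=t^{-\epsilon_\ell}\tau^{(m)}_\ell$ turns it into
$$
\tilde G_{j_mk_m}(z_{i_m}-\rho^{(m)}_{i_m})-\tilde G_{i_mk_m}(z_{j_m}-\rho^{(m)}_{j_m})+\tilde G_{i_mj_m}(z_{k_m}-\rho^{(m)}_{k_m})=0 .
$$
The coefficient matrix $\tilde A$ of this new system has entries in $\mathbb{Q}(t_p^2)$ and is \emph{independent of $t$} (and it kills $\vec u$ and $\vec v$, so its rank is at most $2p-2$); the whole $t$-dependence has migrated to the right-hand side $\tilde b(t)$ through the factors $\rho^{(m)}_\ell\in\{\tau^{(m)}_\ell,\ \tau^{(m)}_\ell/t\}$. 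Consequently, for any fixed element $a$ of the ($t$-independent) left kernel of $\tilde A$, the scalar $a^{\top}\tilde b(t)$ has the rigid form $\alpha+\beta/t$ with $\alpha,\beta\in\mathbb{Q}(t_p^2)$.

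To finish, note that $\tilde A z=\tilde b(t)$ is solvable exactly when $\tilde b(t)$ is orthogonal to the left kernel of $\tilde A$, i.e. when finitely many expressions $\alpha_i+\beta_i/t$ with $\alpha_i,\beta_i\in\mathbb{Q}(t_p^2)$ vanish. A coincidence of $E_{t_p}$ says they vanish at $t=t_p$, hence $\alpha_i t_p+\beta_i=0$; since $t_p\notin\mathbb{Q}(t_p^2)$ by Lemma~\ref{lem:incommensurables}, this forces $\alpha_i=\beta_i=0$, so the system is solvable for every $t\neq 0$, with a solution $x(t)$ depending rationally --- hence continuously --- on $t$. Thus for $t$ close to $t_p$ the projected affine hulls still have a common point, and a routine continuity argument (the intersection point moves continuously and the faces are closed) upgrades this to a nonempty intersection of the actual compact projected faces, that is, a coincidence of $E_t$.

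I expect the main obstacle to be exactly the rescaling step: one has to realize that the grading $G_{ij}(t)=t^{\epsilon_i+\epsilon_j}\tilde G_{ij}$ furnished by Proposition~\ref{prop:4p_fold_family2} makes the coefficient matrix $t$-free, for this is what collapses the solvability condition into the very constrained shape $\alpha+\beta/t$ on which Lemma~\ref{lem:incommensurables} can bite; a bare determinant would merely produce a polynomial in $t$ vanishing at $t_p$, which need not vanish identically (e.g. a spurious factor $t^2-t_p^2$). The remaining points to be careful about --- the passage from affine hulls to compact faces, and a possible rank drop of $\tilde A$ --- are both absorbed by the same orthogonality reformulation and should cause no real difficulty.
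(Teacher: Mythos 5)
Your algebraic core is sound and is essentially the paper's mechanism in a repackaged form: the grading $G_{ij}(t)=t^{\epsilon_i+\epsilon_j}\tilde G_{ij}$ with $\tilde G_{ij}\in\mathbb{Q}(t_p^2)$ coming from Proposition~\ref{prop:4p_fold_family2}, together with Lemma~\ref{lem:incommensurables}, is exactly what forces relations of the shape $\alpha+\beta/t$ that vanish at $t_p$ to vanish identically; your rescaled system $\tilde A z=\tilde b(t)$ with $t$-independent coefficient matrix is a clean way to organize this (modulo the small point that you must take left-kernel vectors with entries in $\mathbb{Q}(t_p^2)$, which is possible since $\tilde A$ is defined over that field, and that you should choose the solution branch passing through the witnessing point at $t=t_p$). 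The genuine gap is your last step. Your linear system only encodes that the projections onto $E_t^\bot$ of the \emph{affine hulls} of the faces have a common point; a coincidence requires the projections of the \emph{compact} unit faces to meet. Your ``routine continuity argument (the intersection point moves continuously and the faces are closed)'' does not bridge this: closedness is the wrong property. If at $t=t_p$ the witnessing points $X_m\in F_m$ lie on the relative boundary of their faces (some free coordinate of $X_m$ equal to an integer), then the nearby common point of the projected affine hulls may fall just outside one of the projected compact faces, and the coincidence is destroyed even though your system stays solvable. Such boundary configurations are not exotic (e.g.\ faces whose projections meet only along edges or at vertices), so the case cannot be waved away.

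This is precisely the part of the argument the paper spends its three steps on. It works with the points $X_m$ themselves, coordinate by coordinate, and distinguishes the equations according to whether the entries $x_{i,1}$, $x_{i,j}$ are integers: equations with both entries integer are shown to hold for \emph{all} $t$ because they are subperiod relations preserved along the family (again via the grading and Lemma~\ref{lem:incommensurables}); equations with exactly one integer entry are shown to be mutually consistent for all $t$ by the same $a+bt=0\Rightarrow a=b=0$ argument; the remaining entries are free. The upshot is that one can keep every integer coordinate \emph{fixed} and move only non-integer coordinates, which then stay strictly inside their unit intervals for $t$ close to $t_p$ — this is what guarantees $X_m\in F_m$ even when $X_m$ sits on the boundary. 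To repair your proof you would have to prove the analogous refinement: not just solvability of the affine-hull system, but solvability by a family of witnessing points whose integer coordinates do not move. The tools you already use (the $\alpha+\beta/t$ rigidity and Lemma~\ref{lem:incommensurables}) suffice for this, but the step is missing as written.
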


\begin{proof}
Consider a coincidence of $E_{t_p}$, that is, a set $F_1,\ldots,F_{2p-1}$ of $(2p-3)$-dimensional unit faces of $\mathbb{Z}^{2p}$ whose orthogonal projections onto $E_{t_p}^\bot$ have a non-empty intersection.
Each face $F_i$ thus contains a point $X_i$ such that the difference of any two such points is in $E_{t_p}$.
Let $\vec{u}(t)$ and $\vec{v}(t)$ denote the basis of $E_t$ obtained by multiplying by $t$ the odd entries of the two vectors of Prop.~\ref{prop:4p_fold_family2}.
For $t=t_p$ and $2\leq j< 2p$, there are thus two real numbers $\lambda_j$ et $\mu_j$ such that
$$
X_1-X_j=\lambda_j\vec{u}(t)+\mu_j\vec{v}(t).
$$
With $x_{i,j}$ denoting the $i$-th entry of $X_j$, this yields $2p(2p-2)$ equations in $t$:
$$
x_{i,1}-x_{i,j}=u_i(t)\lambda_j+v_i(t)\mu_j.
$$
We shall prove that, for $t$ close enough to $t_p$, one can modify the $x_{i,j}$'s so that the above equations are satisfied and each $X_i$ still belongs to the face $F_i$.
These equations fall into exactly three types:
\begin{enumerate}
\item these where both $x_{i,1}$ and $x_{i,j}$ are integers;
\item these where only $x_{i,j}$ is an integer;
\item these where $x_{i,j}$ is not an integer.
\end{enumerate}
We split the proof in three corresponding steps.\\

\noindent {\bf Step 1.}
We show that, for any $j$, there are $a_j$, $b_j$, $c_j$ and $d_j$ in $\mathbb{Q}(t_p^2)$ such that the first type equations are satisfied for $t$ close enough to $t_p$ with
$$
\lambda_j=a_j+\frac{b_j}{t}
\qquad\textrm{and}\qquad
\mu_j=c_j+\frac{d_j}{t}.
$$
Assume that there are two equations of the first type:
\begin{eqnarray*}
x_{i,1}-x_{i,j}&=&u_i(t)\lambda_j+v_i(t)\mu_j,\\
x_{k,1}-x_{k,j}&=&u_k(t)\lambda_j+v_k(t)\mu_j.
\end{eqnarray*}
This is a system in $\lambda_j$ and $\mu_j$ with determinant $u_i(t)v_k(t)-u_k(t)v_i(t)$, which is non-zero for $t=t_p$ and thus also for $t$ close enough to $t_p$ by continuity.
Hence:
\begin{eqnarray*}
\lambda_j&=&\frac{(x_{i,1}-x_{i,j})v_k(t)-(x_{k,1}-x_{k,j})v_i(t)}{u_i(t)v_k(t)-u_k(t)v_i(t)},\\
\mu_j&=&\frac{(x_{i,1}-x_{i,j})u_k(t)-(x_{k,1}-x_{k,j})u_i(t)}{u_k(t)v_i(t)-u_i(t)v_k(t)}.
\end{eqnarray*}
One checks that $\lambda_j$ et $\mu_j$ are in $\mathbb{Q}(t_p^2)$ if $i$ and $k$ are both even, in $\mathbb{Q}(t_p^2)/t$ if they are both odd, and in $\mathbb{Q}(t_p^2)+\mathbb{Q}(t_p^2)/t$ otherwise.
In any case, they can be written as claimed.
This is all the more the case if there is at most one equation of the first type.
Let us now show that any other equation of the first type is automatically satisfied.
Consider such an equation which involves $\lambda_j$ and $\mu_j$:
\begin{eqnarray*}
x_{l,1}-x_{l,j}&=&u_l(t)\lambda_j+v_l(t)\mu_j.
\end{eqnarray*}
Replacing $\lambda_j$ and $\mu_j$ by their expressions yields
$$
(x_{l,1}-x_{l,j})G_{ik}(t)=(x_{k,1}-x_{k,j})G_{il}(t)-(x_{i,1}-x_{i,j})G_{kl}(t),
$$
where $G_{ij}(t)=u_i(t)v_j(t)-u_j(t)v_i(t)$ denotes the Grassmann coordinate of $E_t$.
This is exactly the equation of a subperiod of $E_t$.
It is satisfied for $t=t_p$ and thus for any $t$ because any subperiod of $E_{t_p}$ is also a subperiod of $E_t$.
Last, since none of the $x_{i,j}$'s have been here modified, each $X_i$ is still in $F_i$.\\

\noindent {\bf Step 2.}
We show that, with the above defined $\lambda_j$'s and $\mu_j$'s, there is for any $t$ a vector $X_1$ such that all the equations of the second type are satisfied.
For a given $i$, an equation of the second type characterizes $x_{i,1}$:
$$
x_{i,1}=x_{i,j}+\lambda_j u_i(t)+\mu_j v_i(t).
$$
It thus suffices to check that whenever two such equations characterize the same $x_{i,1}$, they are consistent, that is:
$$
x_{i,j}+\lambda_j u_i(t)+\mu_j v_i(t)=x_{i,k}+\lambda_k u_i(t)+\mu_k v_i(t).
$$
Replacing $\lambda_j$, $\mu_j$, $\lambda_k$ and $\mu_k$ by their expressions yields:
$$
x_{i,j}-x_{i,k}+\left(a_j-a_k+\frac{c_j-c_k}{t}\right)u_i(t)+\left(b_j-b_k+\frac{d_j-d_k}{t}\right)v_i(t)=0.
$$
Whatever the parity of $i$ is, we get an equation of the type $a+bt=0$ with $a$ and $b$ both in $\mathbb{Q}(t_p^2)$.
Lemma~\ref{lem:incommensurables} with $t=t_p$ then yields $a=b=0$.
The equation is thus satisfied for any $t$.
Since $x_{i,1}$ is not an integer and its variation is continuous in $t$, it has still the same floor for $t$ close enough to $t_p$, that is, $X_1$ still belongs to $F_1$.\\

\noindent {\bf Step 3.}
The entry $x_{i,j}$ of an equation of the third type appears only in this equation.
It can thus be freely modified, for any $t$, so that the equation remains satisfied.
Since $x_{i,j}$ is not an integer and its variation is continuous in $t$, it has still the same floor for $t$ close enough to $t_p$, that is, $X_i$ still belongs to $F_i$.
\end{proof}

By combining the above lemma with Proposition~\ref{prop:coincidence}, we finally get a proof of our main result, Theorem~\ref{th:main}.

\thebibliography{bla}

\bibitem{AGS} R.~Ammann, B.~Gr\"unbaum, G.~C.~Shephard, {\em Aperiodic tiles}, Disc. Comput. Geom. {\bf 8} (1992), pp. 1--25.
\bibitem{BF} N.~Bédaride, Th.~Fernique, {\em Ammann-Beenker tilings revisited}, in Aperiodic Crystals, S.~Schmid, R.~L.~Withers, R.~Lifshitz eds (2013), pp. 59--65.
\bibitem{subperiods} N.~Bédaride, Th.~Fernique, {\em When periodicities enforce aperiodicity}, Comm. Math. Phys. {\bf 335} (2015), pp. 1099--1120.
\bibitem{beenker} F.~P.~M.~Beenker, {\em Algebraic theory of non periodic tilings of the plane by two simple building blocks: a square and a rhombus}, TH Report 82-WSK-04 (1982), Technische Hogeschool, Eindhoven.
\bibitem{debruijn} N.~G.~de Bruijn, {\em Algebraic theory of Penrose's nonperiodic tilings of the plane}, Nederl. Akad. Wetensch. Indag. Math. {\bf 43} (1981), pp. 39--66.
\bibitem{burkov} S.~E.~Burkov, {\em Absence of weak local rules for the planar quasicrystalline tiling with the 8-fold rotational symmetry}, Comm. Math. Phys. {\bf 119} (1988), pp. 667--675.
\bibitem{FS} Th.~Fernique, M.~Sablik, {\em Local rules for computable planar tilings}, preprint.
\bibitem{julien} A.~Julien, {\em Complexity and cohomology for cut-and-projection tilings}, Ergod. Th. Dyn. Syst. {\bf 30} (2010), pp. 489--523.
\bibitem{katz} A.~Katz, {\em Matching rules and quasiperiodicity: the octagonal tilings}, in Beyond Quasicrystals, F.~Axel, D.~Gratias eds (1995), pp. 141-189.
\bibitem{KP} M.~Kleman, A.~Pavlovitch {\em Generalized $2$D Penrose tilings: structural pro\-per\-ties}, J. Phys. A: Math. Gen. {\bf 20} (1987), pp. 687--702.
\bibitem{le92} T.~Q.~T.~Le, S.~A.~Piunikhin, V.~A.~Sadov, {\em Local rules for quasiperiodic tilings of quadratic 2-Planes in $\mathbb{R}^4$}, Commun. Math. Phys. {\bf 150} (1992), pp. 23--44.
\bibitem{le92b} T.~Q.~T.~Le, {\em Local structure of quasiperiodic tilings having 8-fold symmetry}, preprint, 1992.
\bibitem{le92c} T.~Q.~T.~Le, {\em Necessary conditions for the existence of local rules for quasicrystals}, preprint (1992).
\bibitem{le93} T.~Q.~T.~Le, S.~A.~Piunikhin, V.~A.~Sadov, {\em The Geometry of quasicrystals}, Russian Math. Surveys {\bf 48} (1993), pp. 37--100.
\bibitem{le95} T.~Q.~T.~Le, {\em Local rules for pentagonal quasi-crystals}, Disc. {\&} Comput. Geom. {\bf 14}, pp. 31--70 (1995).
\bibitem{le95b} T.~Q.~T.~Le, {\em Local rules for quasiperiodic tilings} in The mathematics long range aperiodic order, NATO Adv. Sci. Inst. Ser. C. Math. Phys. Sci. 489: 331--366 (1995).
\bibitem{levitov} L.~S.~Levitov, {\em Local rules for quasicrystals}, Comm. Math. Phys. {\bf 119} (1988), pp. 627--666.
\bibitem{robinson} A.~Robinson, {\em Symbolic dynamics and tilings of $\mathbb{R}^d$}, Symbolic dynamics and its applications, Proc. Sympos. Appl. Math., {\bf 60}, Amer. Math. Soc., Providence, RI, 2004, pp. 81--119.
\bibitem{senechal} M. Senechal, {\em Quasicrystals and geometry}, Cambridge Univ. Press., 1995.
\bibitem{shechtman} D. Shechtman, I. Blech, D. Gratias, J. W. Cahn, {\em Metallic phase with long-range orientational symmetry and no translational symmetry}, Phys. Rev. Let. {\bf 53}, pp. 1951--1953 (1984).
\bibitem{socolar2} J.~E.~S.~Socolar, {\em Simple octagonal and dodecagonal quasicrystals}, Phys. Rev. B {\bf 39} (1989), pp. 10519--10551.
\bibitem{socolar} J.~E.~S.~Socolar, {\em Weak matching rules for quasicrystals}, Comm. Math. Phys. {\bf 129} (1990), pp. 599--619.

\end{document}